\numberwithin{equation}{section}
\newtheorem{theorem}{Theorem}[section]
\newtheorem{lemma}[theorem]{Lemma}
\newcommand{\myendproof}{ \hfill $\square$}
\newcommand{\bke}[1]{\left ( #1 \right )}
\newcommand{\bkt}[1]{\left [ #1 \right ]}
\newcommand{\bket}[1]{\left \{ #1 \right \}}
\newcommand{\norm}[1]{ \| #1  \|}
\newcommand{\bka}[1]{{\left\langle #1 \right\rangle}}
\newcommand{\abs}[1]{\left | #1 \right |}
\def\al{\alpha}
\def\be{\beta}
\def\ga{\gamma}
\def\de{\delta}
\def\e {\varepsilon}
\def\th{\theta}
\def\la{\lambda}
\def\si{\sigma}
\def\om{\omega}
\def\De{\Delta}
\newcommand{\R}{\mathbb{R}}
\newcommand{\Z}{\mathbb{Z}}
\newcommand{\N}{\mathbb{N}}
\renewcommand{\div}{\mathop{\rm div}}
\newcommand{\curl} {\mathop{\rm curl}}
\newcommand{\esssup} {\mathop{\rm ess\,sup}}
\newcommand{\pd}{\partial}
\newcommand{\nb}{\nabla}
\newcommand{\td}{\tilde}
\newcommand{\wbar}[1]{\overline{\rule{0pt}{2.4mm} {#1}}}
\newcommand{\lec}{{\ \lesssim \ }}
\newcommand{\I}{\infty}
\newcommand{\ot}{\otimes}
\newcommand{\LL}{local-Leray }
\newcommand{\SVERAK}{{\v Sver\'ak}}
\newcommand{\bs}{\backslash}
\newcommand{\um}{u_{\text{mild}}}
\begin{document}

\title{Forward Discretely Self-Similar Solutions
of the Navier-Stokes Equations}
\author{Tai-Peng Tsai}
\date{}
\maketitle

{\bf Abstract}. Extending the work of Jia and \SVERAK{} on
self-similar solutions of the Navier-Stokes equations, we show the
existence of large, forward, discretely self-similar solutions.

\section{Introduction}
Denote $\R^4_+ = \R^3 \times (0,\I) $.  Consider the 3D incompressible
Navier-Stokes equations for velocity $u:\R^4_+ \to \R^3$ and pressure
$p: \R^4_+ \to \R$,
\begin{equation}
\label{NS1} \pd _t u -\De u + (u \cdot \nb) u  + \nb p =0 , \quad
\div u = 0,
\end{equation}
in $\R^4_+$, coupled with the initial condition
\begin{equation}
\label{NS2} u|_{t=0}  = u_0, \quad \div u_0=0.
\end{equation}

The system \eqref{NS1} enjoys a scaling property: If
$u(x,t)$ is a solution, then so is
\begin{equation}
\label{scaling}
u^{(\la)}(x,t):= \la u(\la x, \la^2 t)
\end{equation}
for any $\la>0$. We say $u(x,t)$ is {\bf self-similar} (SS) if
$u=u^{(\la)}$ for every $\la>0$. In that case, the value of $u(x,t)$
is decided by its value at any time moment $t=\frac 1{2a}$ and
\begin{equation}
\label{SS.formula}
u(x,t)= \la(t) U (\la(t)x), \quad \la(t) = \frac 1{\sqrt{2at}},
\end{equation}
where $U(x) = u(x,\frac 1{2a})$ and $a>0$ is a parameter.  On the
other hand, if $u=u^{(\la)}$ only for one particular $\la>1$, we say
$u$ is {\bf discretely self-similar} (DSS) with {\bf factor} $\la$, or
{\bf $\la$-DSS}.  Its value in $\R^4_+ $ is decided by its value in
the strip $x\in \R^3$ and $1\le t < \la^2$.  We consider being SS as a
special case of being DSS, and would say ``{\it strictly DSS}'' to exclude
the former.  We call them {\bf forward} to indicate they are defined
for $t>0$.  We can also consider \eqref{NS1} for
\begin{enumerate}
\item $(x,t) \in \R^3 \times (-\I,0)$, or
\item $x \in \R^3$, $u=u(x)$ is time independent.
\end{enumerate}
For both cases the scaling law \eqref{scaling} still holds, and we
define {\bf backward} and {\bf stationary} SS and DSS solutions in the
same manner. In particular, a backward SS solution satisfies
\eqref{SS.formula} with $a<0$, a stationary SS solution satisfies
\begin{equation}
u(x)= \la(x) U (\la(x)x), \quad \la(x) = \frac 1{|x|},
\end{equation}
with $U(x) = u(x)$, and the profile $U(x)$ of the SS
solution for all three cases satisfies Leray's equations
\begin{equation}
\label{eq1.6}
-\De U - aU - ax \cdot \nb U + (U \cdot \nb) U  + \nb p =0 , \quad
\div U = 0,
\end{equation}
with $a>0$, $a<0$ and $a=0$ respectively.  Note the stationary SS
solutions are often called minus-one homogeneous solutions in the
literature.

When $u(x,t)$ is either SS or DSS, then so is $u_0(x)$.
Thus it is natural to assume
\begin{equation}
\label{u0-est}
|u_0(x)| \le \frac {C_*}{|x|} ,\quad 0 \not = x \in \R^3
\end{equation}
for some constant $C_*>0$ and look for solutions satisfying
\begin{equation}
\label{eq1.8}
|u(x,t)| \le \frac {C(C_*)}{|x|}, \quad \text{or }\quad
\norm{u(\cdot,t)}_{L^{3,\I}}\le C(C_*).
\end{equation}
Here by $L^{q,r}$, $1\le q,r\le \I$, we denote the Lorentz spaces.  In
such classes, with sufficiently small $C_*$, the unique existence of
mild solutions -- solutions of the integral equation version of
\eqref{NS1}--\eqref{NS2} via contraction mapping argument, see
\eqref{mild-def} -- has been obtained by Giga-Miyakawa \cite{GM} and
refined by Cannone-Meyer-Planchon \cite{CMP, CP}.  As a consequence,
if $u_0(x)$ is SS or DSS satisfying \eqref{u0-est} with small $C_*$
and $u(x,t)$ is a corresponding solution satisfying \eqref{eq1.8} with
small $C(C_*)$, the uniqueness property ensures that $u(x,t)$ is also
SS or DSS, because $u^{(\la)}$ is another solution with same bound and
same initial data $u_0^{(\la)}=u_0$.  For large $C_*$, mild solutions
still make sense but there is no existence theory since perturbative
methods like the contraction mapping no longer work.

Alternatively, one may try to extend the concept of weak solutions
(which requires $u_0 \in L^2(\R^3)$) to more general initial data. One
such theory is \LL solutions, constructed by Lemari\'e-Rieusset
\cite{LR} (to be defined below). However, there is no uniqueness
theorem for them and hence the existence of large SS or DSS solutions
was unknown.

In a surprising recent preprint \cite{JS}, Jia and \SVERAK{}
constructed SS solutions for every SS $u_0$ which is locally H\"older
continuous. Their main tool is a local H\"older estimate of the
solution near $t=0$, assuming minimal control of the initial data in
the large (see Theorem \ref{th2.3}). This estimate enables them to
prove a priori estimates of SS solutions, and then show their
existence by applying the Leray-Schauder degree theorem.  Note that
this existence theorem does not assert uniqueness. In fact,
non-uniqueness is conjectured in \cite{JS}.

In this note, as an attempt to understand \cite{JS}, we consider the
existence of discretely self-similar solutions for DSS $u_0$ satisfying
\eqref{u0-est} with large $C_*$.

We now recall the definition of \LL solutions, see \cite{LR, JS}.
Suppose
\begin{equation}
\label{u0-cond}  
u_0 \in L^2_{loc}(\R^3;\R^3), \quad \norm{u_0}_{L^2_{uloc}}:=
 \sup_{x_0 \in \R^3} \bigg(\int_{B_1(x_0)} |u_0|^2(x) dx \bigg)^{\frac 12} <
\I, \quad \div u_0 = 0.
\end{equation}
A vector field $u \in L^2_{loc}(\R^3 \times [0,\I))$ is called a {\bf
    \LL solution} of \eqref{NS1}--\eqref{NS2} if (i)
\begin{equation}
\esssup_{0\le t <R^2} \sup _{x_0\in \R^3} \int_{B(x_0,R)} |u(x,t)|^2 dx+
\sup _{x_0\in \R^3} \int_0^{R^2}\int_{B(x_0,R)} |\nabla u(x,t)|^2 dx\,dt<\I,
\end{equation}
\begin{equation}
\label{decay-cond}
\text{ and }\quad \lim_{|x_0|\to \I}  \int_0^{R^2}\int_{B(x_0,R)} |u(x,t)|^2 dx\,dt=0,
\end{equation}
for any $R<\I$, (ii) together with some distribution $p$ in $\R^4_+$
they satisfy \eqref{NS1} in $\R^4_+$ in the sense of distributions,
(iii) $\lim_{t \to 0+} \norm{u(\cdot,t)-u_0}_{L^2(K)}=0$ for any
compact set $K \subset \R^3$, and (iv) $u$ is {\it suitable}, i.e., it
satisfies the {\it local energy inequality} in the sense of
Caffarelli, Kohn, and Nirenberg \cite{CKN}.

The class of \LL solutions contains both Leray-Hopf weak solutions
(with $u_0 \in L^2(\R^3)$) and mild solutions (with $u_0$ in
$L^3(\R^3)$ or $VMO^{-1}$), and is strictly larger.  It is useful for
our purpose because it allows initial data of the size $|u_0(x)| \sim
\frac C{|x|}$, because the local energy inequality is valid, and
because a priori local energy estimates are available (see Lemma
\ref{th2.1}).

We now state our main theorems on the existence of forward
discretely self-similar solutions.  We first consider those with DSS
factor close to one.  We denote $\bka{z}=(|z|^2+2)^{1/2}$ for $z \in
\R^n$, $n\in\N$.

\begin{theorem}[Existence of DSS solutions with factor close to one]
\label{th1.1}\

For any $0<\ga<1$ and $C_*>0$, there is $\la_*=\la_*(\ga,C_*)\in
(1,2)$ such that the following hold.  Suppose $u_0\in
C^{\ga}_{loc}(\R^3 \bs \{ 0 \})$, $\norm{u_0}_{C^\ga(\wbar B_2 \bs
  B_1)} \le C_*$, $\div u_0=0$, and $u_0$ is DSS with factor $\la\in
(1,\la_*]$.  Then there is a \LL solution $u$ of \eqref{NS1} with
initial data $u_0$ that is DSS with factor $\la$ and, for $v(\cdot,t):= 
u(\cdot,t)-e^{t\De}u_0$
\begin{equation}
\label{eq1.12}
|u(x,t)|\le \frac{C}{ |x|+{\sqrt t}}, \quad
|v(x,t)|\le \frac{C\sqrt t}{ |x|^2+ t}
\end{equation}
in $\R^4_+$ with $C=(\ga,C_*)$.  It is also a mild solution in the class
\eqref{eq1.12}$_1$. If furthermore,
$\norm{u_0}_{C^{1,\be}(\wbar B_2 \bs B_1)} \le C_*$ for some
$0<\be<1$, then
\begin{equation}\label{eq1.13}
|v(x,t)| \le \frac{C}{\sqrt t} \bka{\frac x{\sqrt t}}^{-3} \log \bka{\frac
{x}{\sqrt t}} , \quad |D_xv(x,t)| \ \le \frac{C}{ t} \bka{\frac
  x{\sqrt t}}^{-3} 
\end{equation}
in $\R^4_+$ with $C=(\be,C_*)$.
\end{theorem}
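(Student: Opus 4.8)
The plan is to write $u = e^{t\De}u_0 + v$ and to solve for the perturbation $v$, which carries zero initial data and inherits the discrete self-similarity. First I would record the linear estimates for the ``tendency'' $U_0 := e^{t\De}u_0$. Extending the annulus bound $\norm{u_0}_{C^\ga(\wbar B_2\bs B_1)}\le C_*$ by the $\la$-DSS scaling to all of $\R^3\bs\{0\}$ gives the homogeneous-of-degree-$(-1)$ pointwise control $|u_0(x)|\le C(\ga,C_*)/|x|$, and the heat kernel then yields
\begin{equation*}
|U_0(x,t)| \lec \frac{C(\ga,C_*)}{|x|+\sqrt t},
\end{equation*}
together with the matching gradient and H\"older bounds; moreover $U_0$ is itself $\la$-DSS because $e^{t\De}$ commutes with the scaling \eqref{scaling}. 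This already furnishes the first bound in \eqref{eq1.12} for the linear part, so the whole problem is reduced to controlling $v$.

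Next I would set up the equation for $v$, namely the perturbed Navier--Stokes system with zero data, forcing $\mathbb{P}\,\nb\cdot(U_0\ot U_0)$ and linear drift terms $\mathbb{P}\,\nb\cdot(U_0\ot v + v\ot U_0)$, written in Duhamel (mild) form. The discrete self-similarity is imposed by seeking $v$ in the $\la$-DSS class; in similarity variables $y=x/\sqrt t$, $s=\log t$ this is a problem that is \emph{time-periodic} with period $2\log\la$, equivalently a closing (Poincar\'e-type) condition over the single slab $1\le t<\la^2$. The point of taking $\la$ close to one is that this slab is thin: the nonlinear return map over $[1,\la^2]$ is a compact perturbation of the identity that tends to the identity as $\la\to 1^+$, so its Leray--Schauder degree on a suitable ball equals $1$ provided fixed points stay inside that ball.

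The a priori confinement is the crux, and it is here that I would invoke the technology of Jia and \SVERAK{} rather than any smallness of $C_*$. The local energy inequality and the \LL estimates of Lemma~\ref{th2.1} give uniform $L^2_{uloc}$ and energy bounds, while the local H\"older estimate near $t=0$ of Theorem~\ref{th2.3} upgrades these to interior H\"older control that is uniform over the regularization and over the candidate fixed points; together they supply both the a priori bound closing the degree argument and the compactness it requires. Passing to the limit then produces a \LL solution $u=U_0+v$ that is $\la$-DSS with data $u_0$.

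Finally I would extract the pointwise bounds by bootstrapping the Duhamel formula for $v$ against the linear estimates for $U_0$: the decay $|v|\lec \sqrt t/(|x|^2+t)$ in \eqref{eq1.12} follows from estimating the Oseen kernel $e^{(t-s)\De}\mathbb{P}\,\nb\cdot$ acting on $U_0\ot U_0$ and on the $v$-terms in the $\la$-DSS class, after which $u\in L^\I_t L^{3,\I}$ and the mild identity hold. Under the stronger hypothesis $\norm{u_0}_{C^{1,\be}(\wbar B_2\bs B_1)}\le C_*$ the tendency $U_0$ and its gradient decay one order faster, and feeding this improved decay through the convolution gives \eqref{eq1.13}; the borderline homogeneity $-3$ is precisely what produces the extra factor $\log\bka{x/\sqrt t}$ in the bound for $v$. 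I expect the main difficulties to be (i) closing the large-$C_*$ a priori estimate for the periodic/return problem via Theorem~\ref{th2.3}, and (ii) the delicate borderline bootstrap that yields the logarithm in \eqref{eq1.13} without losing the $\bka{x/\sqrt t}^{-3}$ decay.
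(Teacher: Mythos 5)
Your skeleton---perturbing off $U=e^{t\De}u_0$, treating discrete self-similarity as a closing condition over the slab $1\le t\le\la^2$, and concluding via Leray--Schauder degree with a priori bounds from the Jia--\SVERAK{} machinery---matches the paper's Section 5, which solves the fixed point problem $v=K(v,\si)$, $K(v,\si)=-\Phi[(\si U+Ev)\ot(\si U+Ev)]|_Q$, in the weighted space $X$. But at the two points where the argument must actually close, your proposal is wrong or silent. First, the degree: you claim the return map tends to the identity as $\la\to1^+$, so the degree on a suitable ball is $1$. This is unjustified: the Duhamel operator $\Phi$ integrates over the whole history $0<s<t$ through the DSS extension $Ev$, not merely over the thin slab, and the forcing $U\ot U$ is large (of size $C_*^2$), so $K(\cdot,1)$ does not degenerate as $\la\to1$. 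The paper instead homotopes in the \emph{size of the data}, $u(x,0)=\si u_0$ with $\si\in[0,1]$, and gets nonzero degree from unique solvability at small $\si$; even this step is not free---it requires Lemma \ref{th3.3} (``large equals small''), i.e.\ that \emph{any} $\la$-DSS \LL solution with small data, possibly large in the norm of $X$, coincides with the Giga--Miyakawa/Cannone--Meyer--Planchon mild solution. Small-data uniqueness within the small-norm class alone does not determine the degree, and Lemma \ref{th3.3} itself rests on the a priori estimates.

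Second, and more seriously, the a priori estimate itself. Theorem \ref{th2.3} gives H\"older control only near $t=0$ on compact sets; after DSS rescaling this controls $u$ and $v$ only \emph{below} a paraboloid $t\le|x|^2/R_1^2$. Your proposal says nothing about the inner region $t>|x|^2/R_1^2$, $1\le t\le\la^2$, and that is exactly where $\la_*$ and its dependence on $(\ga,C_*)$ originate in the paper: one localizes $u$ by a cutoff corrected with a Bogovskii-type field $w$ so that $\td u=\zeta u-w$ is a compactly supported suitable solution of a perturbed system with energy bounded via Lemma \ref{th2.1}; one then starts a strong solution from $H^1$ data at some $t_1\in[1,3/2]$ with lifespan $T_2(\ga,C_*)$, invokes weak-strong uniqueness, and chooses $\la_*^2=1+T_2/4$ so that one full scaling period $[t_2,t_2\la^2]$ fits inside the strong solution's interval of existence; DSS scaling then spreads the bound over the whole inner region. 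Without some such mechanism, your ``uniform interior H\"older control'' is unsupported for large $C_*$---nothing in Lemma \ref{th2.1} or Theorem \ref{th2.3} bounds $u$ at, say, $x=0$, $t=1$---and the smallness of $\la-1$ would have no identifiable role in your argument. Minor further points: the equation for $v$ must carry the quadratic term $v\ot v$ (the paper simply takes the full $-(\si U+v)\ot(\si U+v)$ as a Stokes forcing); identifying an arbitrary a priori DSS \LL solution with its Duhamel representation $v=\Phi f$ requires the Liouville Lemma \ref{th2.4-new}; and while your mechanism for the logarithm in \eqref{eq1.13} (the borderline $b=3$ case of Lemma \ref{th:cal}) is correct, the input gradient decay $|\nb u|\lec(\sqrt t+|x|)^{-2}$ is established in the paper for the full solution $u$ via a vorticity decomposition and heat potential estimates, not merely for the caloric part $U$.
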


Note that $\la-1>0$ has to be small enough.

A similar result is true for axisymmetric initial data with no swirl
that is DSS with arbitrary factor. We recall that a vector field $u$
in $\R^3$ is called {\bf axisymmetric} if in cylindrical coordinates
$r,\th,z$ with $(x_1,x_2,x_3)=(r\cos \th,r\sin \th,z)$ and
$r=\sqrt{x_1^2+x_2^2}$, it is of the form
\begin{equation} 
u(x) = u^r(r,z)e_r + u^\th(r,z)e_\th + u^z(r,z)e_z.
\end{equation}
The components $u^r,u^\th,u^z$ do not depend upon $\th$ and the basis
vectors $e_r,e_\th,e_z$ are
\begin{equation}
e_r = \left(\frac {x_1}r,\frac {x_2}r,0\right),\quad
e_\th = \left(-\frac {x_2}r,\frac {x_1}r,0\right),\quad
e_z = (0,0,1).
\end{equation}
It is called ``no swirl'' if $u^\th=0$.  This class of vector fields
is preserved under \eqref{NS1}. If the initial data $u_0\in
H^2(\R^3)$ is axisymmetric with no swirl, global in-time regularity of
the solution was proved independently by Ukhovskii-Yudovich \cite{UY}
and Ladyzhenskaya \cite{Lad}. See \cite{LMNP} for a refined proof.
The case of general axisymmetric flow with $u^\th\not =0$ is open.

\begin{theorem}
[Existence of axisymmetric DSS solutions with no swirl]
\label{th1.2} \

For any $1<\la< \I$, $0<\ga<1$,  and $C_*>0$,  suppose $u_0\in
C^{\ga}_{loc}(\R^3 \bs \{ 0 \})$, is axisymmetric with no swirl, DSS
with factor $\la$, $\div u_0=0$, and $\norm{u_0}_{C^\ga(\wbar B_{\la}
  \bs B_1)} \le C_*$.  Then there is a \LL solution $u$ of \eqref{NS1}
with initial data $u_0$ that is DSS with factor $\la$, axisymmetric
with no swirl, and satisfies \eqref{eq1.12} in $\R^4_+$ with $C=
C(\la,\ga,C_*)$. It is also a mild solution in the class
\eqref{eq1.12}$_1$.

If furthermore, $\norm{u_0}_{C^{1,\be}(\wbar B_{\la} \bs B_1)} \le
C_*$ for some $0<\be<1$, then it satisfies \eqref{eq1.13} in $\R^4_+$
with $C= C(\la,\be,C_*)$.
\end{theorem}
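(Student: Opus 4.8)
My plan is to run the same existence machinery as for Theorem~\ref{th1.1} and to change only the step where the restriction $\la\in(1,\la_*]$ enters -- the a priori estimate -- replacing the smallness of $\la-1$ by the global regularity mechanism of axisymmetric no-swirl flows. Writing $v=u-e^{t\De}u_0$, I would work on a Banach space $X$ of axisymmetric, no-swirl, $\la$-DSS fields carrying the weighted norm dictated by \eqref{eq1.12}$_2$, and for $\sigma\in[0,1]$ introduce the operator
\[
F_\sigma(v) \;=\; -\int_0^t e^{(t-s)\De}\,\mathbb{P}\,\nb\cdot(u_\sigma\ot u_\sigma)(s)\,ds,
\qquad u_\sigma = v + \sigma\,e^{t\De}u_0 .
\]
Since $e^{t\De}$ and the Leray projector $\mathbb{P}$ commute with rotations and with the DSS scaling, $F_\sigma$ maps $X$ into itself, and a fixed point of $F_1$ is a DSS mild solution with data $u_0$ in the class \eqref{eq1.12}. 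I would check, as for Theorem~\ref{th1.1}, that $F_\sigma$ is compact (the smoothing of the heat semigroup together with the DSS periodicity give equicontinuity and tightness on a fundamental time strip), and that at $\sigma=0$, where the data vanishes, the only fixed point in $X$ is $v=0$ (giving degree one). The Leray--Schauder degree theorem then yields a fixed point of $F_1$ once all fixed points of $F_\sigma$ are shown to lie in a fixed ball of $X$ uniformly in $\sigma$; this uniform bound is exactly the a priori estimate \eqref{eq1.12}.

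To prove that estimate, let $u=\sigma e^{t\De}u_0+v$ be any member of the family: it is axisymmetric, no-swirl, $\la$-DSS, with data norm $\le C_*$. Because \eqref{eq1.12} is scale-invariant under $u(x,t)=\la\,u(\la x,\la^2 t)$, it suffices to bound $u$ on the fundamental strip $t\in[1,\la^2]$, with constants depending on $\la$. I would dispatch the far field $|x|\gg\sqrt t$ by the decay of $u_0$ and linear heat estimates as in Theorem~\ref{th1.1}; the hard part is a uniform interior bound near the spatial origin, and this is where no-swirl is decisive. The only surviving vorticity component $\om^\th$ is such that $\Om:=\om^\th/r$ satisfies
\[
\partial_t\Om + (u\cdot\nb)\Om \;=\; \De\Om + \frac{2}{r}\,\partial_r\Om ,
\]
which has no vortex--stretching term and hence obeys a maximum principle; this is the mechanism behind the global regularity of Ukhovskii--Yudovich \cite{UY} and Ladyzhenskaya \cite{Lad}. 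Localizing this principle scale by scale and combining it with the local H\"older estimate near $t=0$ (Theorem~\ref{th2.3}) and the local energy estimates (Lemma~\ref{th2.1}), I would rule out concentration of $u$ as $t\to0^+$ at the origin and obtain a bound over the fundamental strip depending only on $\la,\ga,C_*$, with no smallness of $\la-1$. Propagation by DSS scaling then gives \eqref{eq1.12} throughout $\R^4_+$.

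Granting \eqref{eq1.12}, I would obtain the refined bounds \eqref{eq1.13} by bootstrapping the Duhamel formula $v=-\int_0^t e^{(t-s)\De}\mathbb{P}\,\nb\cdot(u\ot u)\,ds$: the hypothesis $\norm{u_0}_{C^{1,\be}(\wbar B_{\la}\bs B_1)}\le C_*$ sharpens the decay of $e^{t\De}u_0$, and inserting \eqref{eq1.12}$_1$ into the integral and estimating against the Oseen kernel gives the stated decay of $v$ and $D_xv$, the logarithm in \eqref{eq1.13}$_1$ arising as the borderline factor when the critical $|x|^{-2}$ decay of $u\ot u$ is convolved with the kernel, exactly as in Theorem~\ref{th1.1}. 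Since $u$ is a classical solution for $t>0$ away from the origin, it satisfies the local energy equality and conditions (i)--(iv), so it is a genuine \LL solution. The step I expect to be the main obstacle is the second paragraph: upgrading the axisymmetric no-swirl maximum principle, which is an $L^\infty$ statement tailored to finite-energy data, into a scale-invariant a priori bound valid in the infinite-energy DSS setting and uniform in $\sigma$, so that no concentration at the origin occurs for arbitrarily large $\la$.
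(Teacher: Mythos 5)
Your overall architecture matches the paper's: Leray--Schauder on the fundamental strip $Q=\R^3\times[1,\la^2]$, compactness of the Duhamel operator, far-field control from Theorem \ref{th2.3} plus DSS rescaling, and the use of the axisymmetric no-swirl structure to remove the restriction $\la\le\la_*$ of Theorem \ref{th1.1}. Two of your shortcuts are repairable but should be flagged. First, anchoring the degree at $\sigma=0$ by ``the only fixed point is $v=0$'' does not by itself give degree one (uniqueness of a fixed point does not determine its index); the paper instead anchors at small $\sigma>0$ via the small-data uniqueness Lemma \ref{th3.3}, though at $\sigma=0$ the quadratic structure of the map would let you conclude through the homotopy $\tau\mapsto\tau K(\cdot,0)$, with uniqueness along it again coming from the energy argument of Lemma \ref{th3.3}. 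Second, for \eqref{eq1.13} it is \emph{not} enough to insert \eqref{eq1.12}$_1$ into the Duhamel integral: $|u\ot u|\lec(|x|+\sqrt t)^{-2}$ convolved against $D_xS$ (kernel rate $-4$) only reproduces the rate $-2$. The paper first proves $|\nb u|\lec(\sqrt t+|x|)^{-2}$ in the sub-paraboloid region (a vorticity decomposition, heat-potential and elliptic estimates, using the $C^{1,\be}$ data), rewrites the source as $g=-u\cdot\nb u$ with $|g(y,s)|\lec(\sqrt s+|y|)^{-3}$, and convolves against $S$ itself; the logarithm in \eqref{eq1.13} is then the $a=3$ or $b=3$ borderline of Lemma \ref{th:cal}.

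The genuine gap is the one you flag yourself: the interior a priori bound. A maximum principle for $\Om=\om^\th/r$ cannot be started in the DSS class. The data are only $C^\ga_{loc}$, so $\om_0$ is not even a function; and for any nontrivial $\la$-DSS flow, scaling forces $\norm{\Om(\cdot,t)}_{L^\I}=\la^3\norm{\Om(\cdot,\la^2t)}_{L^\I}$, i.e.\ $\norm{\Om(\cdot,t)}_{L^\I}$ blows up like $t^{-3/2}$ as $t\to0$ unless $\Om\equiv0$, so there is no time at which an a priori $L^\I$ bound on $\Om$ is available to launch the principle; moreover a ``scale-by-scale'' localization would require control of $\Om$ on the parabolic boundary of each localization region, which is exactly the unknown --- the argument is circular. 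The paper's device is to localize the \emph{solution} rather than the maximum principle. It runs the proof of Lemma \ref{th3.1}(i) verbatim: the cutoff $\td u=\zeta u-w$, with $w$ a Bogovskii-type divergence correction, is a compactly supported, \emph{finite-energy} suitable solution of the forced system \eqref{cutoff.eq}; Lemma \ref{th2.1} yields a good time $t_1\in[1,3/2]$ with $\norm{\td u(\cdot,t_1)}_{H^1}\le C(\ga,C_*)$; weak--strong uniqueness identifies $\td u$ with a strong solution on $[t_1,t_1+T_2)$, whence $\norm{\td u(\cdot,t_2)}_{H^2}\le C$. In Theorem \ref{th1.1} one needs $[t_2,t_2\la^2]\Subset(t_1,t_1+T_2)$, which is what forces $\la\le\la_*$; for Theorem \ref{th1.2} the paper instead applies the finite-energy global regularity theorem for axisymmetric no-swirl flows, \cite[Th 1]{LMNP} (Ukhovskii--Yudovich, Ladyzhenskaya), to $\td u$ from time $t_2$, obtaining $H^1$- and $H^2$-bounds up to $T=4\la^2\supset[t_2,t_2\la^2]$ for arbitrary $\la$. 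DSS scaling then propagates the bound over $\R^4_+$, and the remainder --- Lemma \ref{th:Stokes}, the Liouville Lemma \ref{th2.4-new} giving $v=\Phi f$ and hence \eqref{eq1.12} and mildness, and the Section 5 degree argument restricted to axisymmetric no-swirl fields in $X$ --- is unchanged. This finite-energy reduction by cutoff is precisely the missing step your final paragraph asks for; without it the no-swirl mechanism cannot be engaged in the infinite-energy DSS setting.
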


If $C_*$ is small, the existence is known by \cite{GM,CMP, CP}.
Theorems \ref{th1.1} and \ref{th1.2} are concerned with large $C_*$.

If one assumes higher regularity of $u_0$, say $u_0 \in
C^{3,\be}_{loc}(\R^3 \bs \{ 0 \})$ for some $0<\be<1$, the same proof
of \cite[Th 4.1]{JS} shows
\begin{equation}
\label{th1.1:eq3}
|v(x,t)|\le \frac{Ct }{ (|x|+{\sqrt t})^3}, \qquad (\frac {|x|}{\sqrt
  t} >C).
\end{equation}
This rate is optimal in view of the explicit spatial asymptotes for
small SS solutions with smooth initial data in
\cite{Bra}. Eq.~\eqref{eq1.13} is slightly worse than
\eqref{th1.1:eq3} by a log factor, but only assumes $u_0 \in
C^{1,\be}_{loc}$.  There is a gap between \eqref{eq1.12} and
\eqref{eq1.13} especially when one takes $1-\ga = \be \ll 1$. It is
probably because we require pointwise bound of the source term when we
estimate the Stokes system. One may be able to narrow the gap by
considering integral bounds of the source term.

Our approach follows that of \cite{JS}, and relies heavily on the a
priori estimates of the solutions, see Lemmas \ref{th3.1} and
\ref{th3.2}. One difference is that, instead of estimating a
stationary solution of \eqref{eq1.6}, we need to estimate a time
dependent solution of \eqref{NS1}.  Another difference is the
following: \cite{JS} first proves a priori estimates and constructs
solutions for smooth initial data, and then gets solutions for $C^\ga$
data by approximation. In contrast, we prove a priori estimates and
construct solutions for $C^\ga$ initial data directly.  The reason for
this change is that, at least for Theorem \ref{th1.1}, we need the
explicit dependence of $\la_*$ on the local $C^\ga$-norm of the data.

To extend these results, one may look for DSS solutions with 
DSS initial data of the form
\begin{equation}
u_0 = u_0^1 + u_0^2
\end{equation}
where $u_0^1$ is SS and large, while $u_0^2$ is DSS with a large
factor and is sufficiently small. When $\la$ is large, a priori estimates
seem unavailable, and one may try to study the linearized flow around
$u^1$, a SS solution with initial data $u_0^1$. It turns out to be
very challenging.

Another interesting problem is the non-uniqueness of \LL solutions
conjectured by \cite{JS} and \SVERAK{} \cite{Sve-video}: Considers SS
solutions $W_\si$ with SS initial data $\si u_0$, $\si>0$. For $\si$
small, $W_{\si}$ is unique (see Lemma \ref{th3.2}). However, when one
increases $\si$, one might get bifurcation. If the bifurcation is of
saddle-node type, we get two SS solutions with the same initial data.
If it is a Hopf bifurcation, the new solutions would be time periodic
in the similarity variables ($y=t^{-1/2}x$ and $s=\log t$) and
correspond to DSS solutions.  These kind of DSS solutions $u$ are
different from those in Theorems \ref{th1.1} and \ref{th1.2} since
their initial data $u_0$ are SS and only the difference
$v(\cdot,t)=u(\cdot,t)- e^{t\De}u_0$ are strictly DSS. One may
approximate $u_0$ by DSS data $u_0^\e$ and take limits $\e\to 0$, and
try to show that the strict DSS property of the corresponding
solutions $u^\e$ is somehow preserved in the limit. Of course this is
purely speculation.

The existence question of discretely self-similar solutions also occur
in two other instances for Navier-Stokes equations: (i) For singular
backward solutions $u(x,t): \R^3 \times (-\I,0) \to \R^3$ of
\eqref{NS1}, the nonexistence of SS solutions under some minimal
integrability assumptions was proved in \cite{NRS} and
\cite{Tsai98}. The existence problem for DSS solutions under the same
integrability assumptions is open; (ii) For stationary Navier-Stokes
equations, the self-similar (minus-one homogeneous) solutions in $\R^3
\backslash \{ 0 \}$ are shown by \SVERAK{} \cite{Sve} to be exactly
those axisymmetric solutions found by Landau \cite{Landau,LL}. The
existence problem for strictly DSS solutions is open.

The rest of this note is structured as follows: In \S2 we 
consider the Stokes system. In \S3 we prove a priori estimates
for DSS \LL solutions. In \S 4 we show their uniqueness for small
initial data. Finally in \S 5 we prove their existence.

{\it Notation}. We denote $\bka{z}=(|z|^2+2)^{1/2}$ for $z \in \R^n$,
$n\in\N$, and $A \lec B$ if there is a constant $C$, which may change
from line to line, such that $A \le CB$. We denote by $D^k_x u$ all
$k$-th order partial derivatives of $u$ with respect to the variable
$x$.
\section{Stokes system}
Consider the non-stationary Stokes system in $\R^3$
with a force tensor $f=(f_{ij})$
\begin{equation}
\label{v-eq1}
\pd_t v - \De v + \nb p=\nb \cdot f, \quad \div v=0,
\quad
v|_{t=0}=0.
\end{equation}
Here $(\nb \cdot f)_j = \sum_k \pd_k f_{kj}$.
If $f$ has sufficient decay, a solution is given by $v=\Phi f$, with
\begin{align}
\label{La-def} (\Phi f)_i(x,t) &
=\int_0^{t}\int_{\R^3}\pd_{x_k}S_{ij}(x-y,t-s) f_{kj}(y,s)dyds
\end{align}
and $S=(S_{ij})$, the Oseen tensor, is
the fundamental solution of the non-stationary Stokes
system in $\R^3$ (see \cite{Oseen} and \cite[page 27]{Solonnikov})
\begin{equation}
\label{Stokes-tensor}
S_{ij}(x,t)=\Gamma(x,t)\delta_{ij}+\frac{1}{4\pi}\frac{\partial^2}{\partial
x_i\partial x_j}\int_{\R^3}\frac{\Gamma(y,t)}{\abs{x-y}}dy,\quad
Q_j(x,t)=\frac{\delta(t)}{4\pi}\frac{x_j}{\abs{x}^3},
\end{equation}
where $\Gamma(x,t)=(4\pi t)^{-n/2}e^{-x^2/4t}$ is the heat kernel. It
is known in \cite[Theorem 1]{Solonnikov} that 
%
\begin{equation}\label{estimate-T}
\abs{D^\ell_x\pd^k_t S(t,x)}\leq C_{k,l}
(|x|+\sqrt{t})^{-3-\ell-2k}, \quad (\ell,k \ge 0),
\end{equation}
where $D^\ell_x$ indicates $\ell$-th order derivatives with respect
to the variable $x$.

When the bilinear operator $\Phi(u\ot v): X \times X \to X$ is
well-defined on some Banach space $X$ of $\R^3$-valued fields on
$\R^4_+$, we say $u(x,t)$ is a {\bf mild solution} of \eqref{NS1} and
\eqref{NS2} if
\begin{equation}\label{mild-def}
u(\cdot,t) = e^{t\De}u_0 - \Phi (u \ot u) (\cdot,t) \quad \text{in }X.
\end{equation}

We start with an integral estimate.

\begin{lemma} \label{th:cal}
Let $0<a<5$, $0<b<5$ and $a+b>3$. Then
\begin{equation}
\phi(x,a,b)=\int_0^1 \int_{\R^3} (|x-y|+\sqrt{1-t})^{-a}
(|y|+\sqrt t)^{-b} dy\,dt
\end{equation}
is well defined for $x\in \R^3$  and
\begin{equation}
\phi(x,a,b) \lec R^{-a}+R^{-b} + R^{3-a-b} [1+ (1_{a=3}+1_{b=3}) \log R]
\end{equation}
where $R=|x|+2$.
\end{lemma}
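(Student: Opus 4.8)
The plan is to carry out the spatial integral at each fixed time and then integrate in $t$. Write $s=\sqrt{1-t}$, $\sigma=\sqrt t$, $r=|x|$, and set
\[
J(t)=\int_{\R^3}(|x-y|+s)^{-a}(|y|+\sigma)^{-b}\,dy,
\]
so $\phi(x,a,b)=\int_0^1 J(t)\,dt$. For fixed $t\in(0,1)$ the integrand is singular only at $y=x$ (at scale $s$) and at $y=0$ (at scale $\sigma$), and both singularities are smoothed, so the only issue is at spatial infinity, where the integrand decays like $|y|^{-a-b}$ and is integrable precisely because $a+b>3$. This already yields well-definedness; the work is the quantitative bound.

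For $|x|\ge 2$ I would split $\R^3$ into three regions: (A) $|y|\le r/2$, where $|x-y|\ge r/2$; (B) $|x-y|\le r/2$, where $|y|\ge r/2$; and (C) the far field where both $|y|\gtrsim r$ and $|x-y|\gtrsim r$. In region (A) one pulls out $(|x-y|+s)^{-a}\lec r^{-a}$ and is left with $r^{-a}\int_{|y|\le r/2}(|y|+\sigma)^{-b}\,dy$; the radial integral equals (up to constants) $r^{3-b}$ when $b<3$, $\sigma^{3-b}$ when $b>3$, and $\log(r/\sigma)$ when $b=3$, giving the three bounds $r^{3-a-b}$, $r^{-a}\sigma^{3-b}$, $r^{-a}\log(r/\sigma)$. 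Region (B) is identical after exchanging $a\leftrightarrow b$ and $s\leftrightarrow\sigma$, producing $r^{3-a-b}$ (for $a<3$), $r^{-b}s^{3-a}$ (for $a>3$), or $r^{-b}\log(r/s)$ (for $a=3$). In region (C) both factors are comparable to powers of $|y|$; splitting into the shell $r/2<|y|<2r$ and the tail $|y|>2r$, and using $a+b>3$ on the tail, gives the clean bound $J_C\lec r^{3-a-b}$ with no dependence on $s,\sigma$.

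The final step is the $t$-integration, and this is where the remaining hypotheses enter. The main term $r^{3-a-b}$ and region (C) integrate trivially to $r^{3-a-b}$. When $b>3$, $\int_0^1 r^{-a}\sigma^{3-b}\,dt=r^{-a}\int_0^1 t^{(3-b)/2}\,dt$ converges exactly when $b<5$ and yields $Cr^{-a}$; symmetrically the $a>3$ correction integrates to $Cr^{-b}$ precisely when $a<5$. The logarithms appear only at the thresholds: at $b=3$, $\int_0^1 r^{-a}\log(r/\sqrt t)\,dt\lec r^{-a}\log r=r^{3-a-b}\log r$, and at $a=3$ one gets $r^{-b}\log r=r^{3-a-b}\log r$, matching the indicator $1_{a=3}+1_{b=3}$. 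Since $R\approx r$ here, and since $b<3$ (resp. $a<3$) makes $R^{-a}$ (resp. $R^{-b}$) already dominated by $R^{3-a-b}$, collecting the three regions gives exactly $R^{-a}+R^{-b}+R^{3-a-b}[1+(1_{a=3}+1_{b=3})\log R]$.

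It remains to treat $|x|<2$, where $R$ is comparable to a constant and one only needs $\phi\lec 1$. Here the singularities at $0$ and $x$ may merge, so the three-region split is replaced by a direct estimate: on $|y|\ge 2$ the integrand is $\lec|y|^{-a-b}$, which integrates (in $y$, then $t$) to a constant because $a+b>3$, while on $|y|<2$ one bounds each singularity separately, and the $t$-integrability of the resulting $\sigma^{3-b}$ and $s^{3-a}$ factors again uses exactly $b<5$ and $a<5$. The main obstacle is not the region decomposition, which is routine, but the careful bookkeeping of the corrections' dependence on $\sigma=\sqrt t$ and $s=\sqrt{1-t}$: one must verify that the time integral of the $b>3$ (resp. $a>3$) correction is borderline convergent exactly under $b<5$ (resp. $a<5$), and that the two logarithmic factors are generated precisely at the thresholds $b=3$ and $a=3$.
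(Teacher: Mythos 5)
Your proposal is correct and follows essentially the same route as the paper: the trivial bound for $|x|$ bounded, then for large $|x|$ the same three-region split (near $y=0$, near $y=x$, and the far field), pulling the nonsingular factor out as $R^{-a}$ or $R^{-b}$ in the first two regions and using $a+b>3$ in the tail. The only difference is bookkeeping --- you integrate in $y$ at fixed $t$ and then in $t$, tracking the $\sqrt t$ and $\sqrt{1-t}$ dependence explicitly, whereas the paper estimates the space-time integrals jointly (splitting $|y|<R/2$ at $|y|=1$); this makes the roles of $a<5$, $b<5$ and the logarithmic thresholds $a=3$, $b=3$ more visible but is not a different argument.
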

The cases $a,b \in \{ 3,4\}$ are stated in \cite[(4.12)]{JS}. We will
also use $ 4 \le a < 5$ and $2 \le b <3$.

\begin{proof}
Clearly $\phi \lec 1$ for $R\le 8$. Consider now $R>8$. Estimate the
integral in 3 parts:
\begin{equation}
\int_0^1 \int_{|y|>2R} (\cdot)\lec \int_0^1 \int_{|y|>2R}
|y|^{-a-b}dy\,dt = C R^{3-a-b},
\end{equation}
\begin{equation}
  \begin{split}
&\int_0^1 \int_{|y|<R/2}(\cdot) \lec \int_0^1 \int_{|y|<R/2}R^{-a}
 (|y|+\sqrt t)^{-b}dy dt
\\
&\lec R^{-a}  \bke{\int_0^1  \int_{|y|<1}+\int_0^1 \int_{1<|y|<R/2}} (\cdot)\lec
R^{-a} (1+R^{3-b}(1+ 1_{b=3} \log R )),
  \end{split}
\end{equation}
and similarly
\begin{equation}
  \begin{split}
&\int_0^1 \int_{R/2<|y|<2R} (\cdot) \lec \int_0^1 \int_{R/2<|y|<2R}
(|x-y|+\sqrt {1-t})^{-a} R^{-b} dydt
\\
&\lec R^{-b} \int_0^1 \int_{|z|<3R}
(|z|+\sqrt {t})^{-a}  dzdt
 \lec R^{-b}  (1+R^{3-a}(1+ 1_{a=3} \log R )).
  \end{split}
\end{equation}
\end{proof}

\begin{lemma} \label{th:Stokes}
Suppose $|f(x,t)| \le \frac 1t \bke{\frac{\sqrt t}{|x|+\sqrt t}}^{2+m}$ in
$\R^4_+$ for $0\le m<1$. Then
\begin{equation}
\label{eq2.13}
|\Phi f(x,t)| \lec \frac1{\sqrt t} \bke{\frac{\sqrt t}{|x|+\sqrt
    t}}^{2+m}, \quad \forall (x,t)\in \R^4_+.
\end{equation}

\end{lemma}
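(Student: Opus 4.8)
The plan is to bound $\Phi f$ pointwise by a convolution-type integral of the source against the kernel estimate \eqref{estimate-T}, and then to recognize the resulting integral as the function $\phi$ of Lemma \ref{th:cal} after a parabolic rescaling in $(y,s)$. Starting from the representation \eqref{La-def}, and using \eqref{estimate-T} with $\ell=1$, $k=0$ (the finitely many tensor components being absorbed into the constant), each component satisfies
\[
|\Phi f(x,t)| \lec \int_0^t \int_{\R^3} |D_x S(x-y,t-s)|\,|f(y,s)|\,dy\,ds
\lec \int_0^t \int_{\R^3} (|x-y|+\sqrt{t-s})^{-4}\,|f(y,s)|\,dy\,ds.
\]

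Next I would rewrite the hypothesis in the equivalent form $|f(y,s)| \le s^{m/2}(|y|+\sqrt s)^{-(2+m)}$, and, since $0<s<t$ and $m\ge 0$, bound the temporal factor crudely by $s^{m/2}\le t^{m/2}$. The parabolic change of variables $y=\sqrt t\,\eta$, $s=t\,\sigma$ (and writing $x=\sqrt t\,\xi$) then turns the remaining integral into a fixed-time integral; tracking the Jacobian $t^{5/2}$ and the homogeneity of the two weights, a direct computation gives
\[
|\Phi f(x,t)| \lec t^{m/2}\cdot t^{-\frac12-\frac m2}\,\phi(\xi,4,2+m)
= t^{-1/2}\,\phi\bke{x/\sqrt t,\,4,\,2+m},
\]
where $\phi$ is exactly the integral of Lemma \ref{th:cal}.

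Finally, I would invoke Lemma \ref{th:cal} with $a=4$ and $b=2+m$. The hypotheses hold because $a=4\in(0,5)$, $b=2+m\in[2,3)\subset(0,5)$ (this is where $0\le m<1$ enters), and $a+b=6+m>3$; moreover $a\ne 3$ and $b\ne 3$, so no logarithmic factor appears. With $R=|\xi|+2$ this yields $\phi(\xi,4,2+m)\lec R^{-4}+R^{-(2+m)}+R^{-3-m}\lec (1+|\xi|)^{-(2+m)}$, the middle term being dominant. Substituting $\xi=x/\sqrt t$ and using $(|x|/\sqrt t+1)^{-(2+m)}=\bke{\frac{\sqrt t}{|x|+\sqrt t}}^{2+m}$, the prefactor $t^{-1/2}$ then produces precisely \eqref{eq2.13}.

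The one place demanding care is the treatment of the temporal weight $s^{m/2}$. It is tempting to absorb it into the spatial weight via $s^{m/2}=(\sqrt s)^m\le(|y|+\sqrt s)^m$, reducing the source to $(|y|+\sqrt s)^{-2}$ and letting one apply Lemma \ref{th:cal} with $b=2$; but this loses decay and only yields the bound $(|x|+\sqrt t)^{-2}$, which is weaker than the claimed $(|x|+\sqrt t)^{-(2+m)}$ when $m>0$. Exploiting $s<t$ to bound $s^{m/2}\le t^{m/2}$ instead keeps the full spatial exponent $b=2+m$, which is exactly why Lemma \ref{th:cal} was stated in the range $2\le b<3$. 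Everything else reduces to the region-splitting already performed in the proof of Lemma \ref{th:cal}.
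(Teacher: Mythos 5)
Your proof is correct and takes essentially the same route as the paper: bound the kernel by \eqref{estimate-T} with $\ell=1$, make the parabolic change of variables, and apply Lemma \ref{th:cal} with $a=4$, $b=2+m$, the middle term $R^{-(2+m)}$ being dominant. Your bounding of $s^{m/2}\le t^{m/2}$ before rescaling is algebraically identical to the paper's step of rescaling first and then using $s^{m/2}\le 1$ for the rescaled time variable.
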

\begin{proof}
By \eqref{estimate-T} and change of variables $x=\sqrt t \td x$,
$y=\sqrt t \td y$, $s=t \td s$,
\begin{equation}
| \Phi f(x,t)| \lec \int_0^t \int_{\R^3}
 (|x-y|+\sqrt{t-s})^{-4 }
s^{m/2}(|y|+\sqrt s)^{-2-m} dy\,ds
\end{equation}
\begin{equation}
= t^{-1/2}  \int_0^1 \int_{\R^3}
 (|\td x-y|+\sqrt{1-s})^{-4 }s^{m/2}
(|y|+\sqrt s)^{-2-m} dy\,ds .
\end{equation}
By $s^{m/2}\le 1$ and Lemma \ref{th:cal}, we get $ | \Phi f(x,t)| \lec
 t^{-1/2} \bka{\td x} ^{-(2 +m)}$, i.e., \eqref{eq2.13}.
\end{proof}

We now show H\"older estimates in space and time. Fix $0<\th < 1$.
Denote a local parabolic H\"older estimate for $(x,t)\in \R^4_+$:
\begin{equation}
[u]_\th (x,t) := \sup_{\td x,\td t}\bket{ \frac{|u(x,t)-u(\td x, \td
    t)|}{\de^\th} \Bigg|\quad \de:= |x-\td x| + \sqrt{|t-\td t|} \le
  \frac{\sqrt t}{10} }.
\end{equation}

\begin{lemma} \label{th:Holder}
Suppose $|f(x,t)| \le (|x|+\sqrt t)^{-2}$ in $\R^4_+$. Then $ \Phi f$
is locally H\"older continuous in $x$ and $t$ with any exponent
$0<\th<1$ and for any $T \in (1,\I)$
\begin{equation}
[  \Phi f]_\th (x,t) \le C_T \bka{x}^{-2}, \quad
\forall x\in \R^3, \quad \forall 1\le t \le T.
\end{equation}
\end{lemma}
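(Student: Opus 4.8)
The plan is to establish the bound by controlling a pure spatial increment and a pure temporal increment separately, and then chaining them. Fix $(x,t)$ with $1\le t\le T$ and a competitor $(\tilde x,\tilde t)$ with $\de:=|x-\tilde x|+\sqrt{|t-\tilde t|}\le\sqrt t/10$. Since the increment is at most $\sqrt T/10$, a $T$-dependent constant, one has $\bka{\tilde x}\sim\bka{x}$ (constants depending on $T$), so it suffices to bound $|\Phi f(x,t)-\Phi f(\tilde x,t)|$ and $|\Phi f(\tilde x,t)-\Phi f(\tilde x,\tilde t)|$ each by $C_T\,\de^\th\bka{x}^{-2}$. Throughout I write $K=\pd_x S$ for the kernel in \eqref{La-def}; by \eqref{estimate-T}, $|K(z,\tau)|\lec(|z|+\sqrt\tau)^{-4}$, $|D_z K(z,\tau)|\lec(|z|+\sqrt\tau)^{-5}$ and $|\pd_\tau K(z,\tau)|\lec(|z|+\sqrt\tau)^{-6}$, and all constants below may depend on $T$ through $1\le t\le T$ (so $\sqrt t\sim1$).

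For the spatial increment, put $h=|x-\tilde x|\le\sqrt t/10$ and split $\R^3\times(0,t)$ into the \emph{near} set $N=\{(y,s):|x-y|+\sqrt{t-s}<2h\}$ and its complement. On $N$ I estimate $K(x-y,\cdot)$ and $K(\tilde x-y,\cdot)$ separately: there $\sqrt{t-s}<2h\le\sqrt t/5$ forces $s>\tfrac{24}{25}t$ and $|y|>|x|-\sqrt T/5$, whence $|y|+\sqrt s\gec\bka{x}$ and $|f|\lec\bka{x}^{-2}$; since $\int_N(|x-y|+\sqrt{t-s})^{-4}\,dy\,ds\lec h$ by parabolic scaling and $h\le\sqrt T/10$ is bounded so that $h\lec h^\th$, the near contribution is $\lec h^\th\bka{x}^{-2}$. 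Off $N$ the mean value theorem gives $|K(x-y,t-s)-K(\tilde x-y,t-s)|\lec h\,(|x-y|+\sqrt{t-s})^{-5}$, the segment $[x,\tilde x]$ staying comparable to $x$ because $h\le\tfrac12(|x-y|+\sqrt{t-s})$ there. Writing $(\cdot)^{-5}=(\cdot)^{-(1-\th)}(\cdot)^{-(4+\th)}$ and using $(\cdot)^{-(1-\th)}\le(2h)^{-(1-\th)}$ off $N$ turns the factor $h$ into $h^\th$ and reduces the far contribution to $\int_0^t\!\int(|x-y|+\sqrt{t-s})^{-(4+\th)}(|y|+\sqrt s)^{-2}\,dy\,ds$. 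After the rescaling used in Lemma \ref{th:Stokes}, this equals a power of $t$ times $\phi(\cdot,4+\th,2)$ of Lemma \ref{th:cal}; since $4+\th<5$ and $b=2$ the dominant term is $R^{-2}$ with no logarithm, giving $\lec\bka{x}^{-2}$, so the far part is $\lec h^\th\bka{x}^{-2}$ as well.

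For the temporal increment I assume $\tilde t<t$ (the opposite case is symmetric, both times remaining $\sim t$), set $k=t-\tilde t$, $\de=\sqrt k$, $t_*=t-2k>0$, and split the $s$-integrals at $s=t_*$. The two \emph{near} pieces $\int_{t_*}^t\!\int K(x-y,t-s)f$ and $\int_{t_*}^{\tilde t}\!\int K(x-y,\tilde t-s)f$ are bounded directly: performing the $y$-integral first gives $\int(|x-y|+\sqrt\tau)^{-4}(|y|+\sqrt t)^{-2}\,dy\lec\tau^{-1/2}(|x|+\sqrt t)^{-2}$ for $\tau\le t$ (using $\sqrt s\sim\sqrt t$ on this window), and integrating $\tau^{-1/2}$ over a time window of length $\lec k$ yields a factor $\lec\sqrt k=\de$; as $\de$ is bounded this is $\lec\de^\th\bka{x}^{-2}$. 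On the \emph{far} part $s<t_*$ both kernels are evaluated at times $\ge k$, so the time mean value theorem gives $|K(x-y,t-s)-K(x-y,\tilde t-s)|\lec k\,(|x-y|+\sqrt{t-s})^{-6}$ (here $t-s\le2(\tilde t-s)$ keeps the intermediate time comparable to $t-s$). Extracting $(|x-y|+\sqrt{t-s})^{-(2-\th)}\le(\sqrt2\,\de)^{-(2-\th)}$, valid since $\sqrt{t-s}>\sqrt{2k}$ there, converts the prefactor $k=\de^2$ into $\de^\th$ and again leaves $\int_0^t\!\int(|x-y|+\sqrt{t-s})^{-(4+\th)}(|y|+\sqrt s)^{-2}\lec\bka{x}^{-2}$ by Lemma \ref{th:cal}.

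The crux---and where all the bookkeeping concentrates---is to peel off exactly the right amount of the kernel's radial weight so that two demands are met at once: the cut-off at parabolic scale $\de$ must manufacture the Hölder gain $\de^\th$, while the residual exponent must be some $a=4+\th\in(4,5)$ for which Lemma \ref{th:cal} both applies (it requires $a<5$) and returns the sharp decay $\bka{x}^{-2}$ (the borderline $b=2<3$ case, with no logarithm). Peeling a power $1-\th$ in the spatial step and $2-\th$ in the temporal step is precisely what this double requirement forces. A secondary but pervasive point is that the argument is only local in time: every estimate uses $1\le t\le T$ to treat $\sqrt t\sim1$ and to pass freely among $(|x|+\sqrt t)^{-2}$, $\bka{\tilde x}^{-2}$, and $\bka{x}^{-2}$, which is exactly why the constant must be allowed to depend on $T$.
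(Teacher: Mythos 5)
Your proposal is correct and follows essentially the same strategy as the paper's proof: a near/far splitting at parabolic scale $\de$ around the kernel singularity, the mean value theorem on the far part with exactly the same peeling of powers $1-\th$ (space) and $2-\th$ (time) to produce $\de^\th$ times a kernel of exponent $4+\th<5$, and then Lemma \ref{th:cal} with $(a,b)=(4+\th,2)$ for the $\bka{x}^{-2}$ decay, with the near pieces bounded directly by $\de\lec\de^\th$. The only differences are tactical --- you split on the parabolic distance $|x-y|+\sqrt{t-s}<2h$ rather than the paper's spatial cylinder $|z|<2\de$ (which lets you avoid the paper's case analyses $|x|\lessgtr 4\de$ and $|x|\lessgtr 2$), and you keep $t\in[1,T]$ throughout instead of normalizing $t=1$ --- and both variants are sound.
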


\begin{proof}
We may assume $t=1$.

We first show spatial H\"older estimate.
For $h\in \R^3$ with $\de=|h|<0.1$, we have
\begin{equation}
|  \Phi f(x+h,t)-   \Phi f(x,t)  |\le I_1 + I_2
\end{equation}
\begin{equation}
:=
\int_0^1 \bke{ \int_{|z|>2\de} + \int_{|z|<2\de} }
| D_x S(z+h,s) -  D_x S(z,s) |\cdot |f(x-z,1-s)|\,dz\,ds
\end{equation}

For $I_1$, by mean value theorem and \eqref{estimate-T},
\begin{equation}
I_1\le \int_0^1\int_{|z|>2\de} |h|
|D^{2}_x S(z,s)  |\cdot |f(x-z,1-s)|\,dz\,ds
\end{equation}
\begin{equation}
\le \int_0^1\int_{|z|>2\de} \de^\th (|z|+\sqrt s)^{-4 - \th}
(|x-z|+\sqrt {1-s})^{-2} \,dz\,ds
\end{equation}
By Lemma \ref{th:cal},
\begin{equation}
\label{I1.est}
I_1 \lec \de^\th \bka{x}^{-2}.
\end{equation}

For $I_2$, we have $|z+h|<3\de$.
If $|x|<4\de$, splitting $0<t<1$ to $0<t<\frac 12$ and
$\frac 12 <t<1$ and using \eqref{estimate-T}, we have
\begin{equation}
I_2 \lec \int_0^{1/2} \int _{|z|<3\de} (|z|+\sqrt s)^{-4}
 \,dz\,ds+\int_{1/2}^1 \int _{|z|<7\de}
(|z|+\sqrt {1-s})^{-2} \,dz\,ds
\end{equation}
Using
\begin{equation}
\label{eq2.27}
\int_0^1 \int_{|z|<\de} (|z|+\sqrt s)^{-\al} dz\, ds \lec \left
\{ \begin{array}{c} \de^{5-\al}, \quad (2<\al<5),\\ \de^3 \log (1/\de),
  \quad (\al=2),\end{array} \right .
\end{equation}
(which can be shown by splitting $(0,1)=(0,\de^2) \cup [\de^2,1)$),
we get
\begin{equation}
I_2 \lec \de + \de^{3}\log(1/\de) \lec  \de.
\end{equation}

If $4\de<|x|$,
we have (using \eqref{eq2.27} again)
\begin{equation}
I_2 \lec \int_0^1 \int _{|z|<3\de} (|z|+\sqrt s)^{-4} (|x|+\sqrt {1-s})^{-2}
\,dy\,ds
\end{equation}
\begin{equation}
\lec
\int_0^{1/2} \int _{|z|<3\de} (|z|+\sqrt s)^{-4}\bka{x}^{-2}
 \,dz\,ds+\int_{1/2}^1 \int _{|z|<3\de}
(|x|^2+1-s)^{-1} \,dz\,ds
\end{equation}
\begin{equation}
\lec  \de\bka{x}^{-2} + \de^3 \log \frac {1+|x|^2}{|x|^2} \lec 
\de\bka{x}^{-2},
\end{equation}
which is also bounded by the right side of \eqref{I1.est} (and much
less if $\de \ll 1$).

We next show temporal H\"older estimate.  Take $\tau=\de^2$ with
$0<\de< 0.1$. (For $\tau<0$ we can reverse $t$ and $t+\tau$).  We have
\begin{equation}
|  \Phi f(x,1+\tau)-   \Phi f(x,1)  |\le I_1 + I_2+I_3
\end{equation}
\begin{equation}
:=
\int_0^{1-\tau} \int
|D  S(z,1+\tau-s) - D  S(z,1-s) |\cdot |f(x-z,s)|\,dz\,ds
\end{equation}
\begin{equation}
+ \int_{1-\tau}^{1+\tau} \int | D S(z,1+\tau-s) |\cdot |f(x-z,s)|\,dz\,ds
\end{equation}
\begin{equation}
+ \int_{1-\tau}^{1} \int |- D S(z,1-s) |\cdot |f(x-z,s)|\,dz\,ds.
\end{equation}

For $I_1$, by mean value theorem and \eqref{estimate-T},
\begin{equation}
I_1\lec \int_0^{1-\tau} \int \tau (|z|+\sqrt{1- s})^{-6} (|x-z|+\sqrt s)^{-2}\,dz\,ds
\end{equation}
\begin{equation}
\le \int_0^{1-\tau} \int \de^\th (|z|+\sqrt{1- s})^{-4-\th} (|x-z|+\sqrt s)^{-2}\,dz\,ds.
\end{equation}
By Lemma \ref{th:cal},
\begin{equation}
I_1 \lec \de^\th \bka{x}^{-2}.
\end{equation}
The two terms $I_2$ and $I_3$ are similar and it suffices to estimate
$I_3$: By \eqref{estimate-T},
\begin{equation}
I_3\lec \int_{1-\tau}^{1} \int (|z|+\sqrt{1- s})^{-4} \,(|x-z|+1)^{-2}
\,dz\,ds.
\end{equation}
Integrating in $s$ first, we get
\begin{equation}
I_3 \lec \int_{\R^3} \frac {\tau}{|z|^2+\tau} |z|^{-2}\,
(|x-z|+1)^{-2}\,dz.
\end{equation}
If $|x|\le 2$, then $|x-z|+1\sim \bka{z}$ and
\begin{equation}
I_3 \lec  \int \frac {\de^\th}{|z|^\th} |z|^{-2}\,  \bka{z}^{-2}\,dz \lec \de^\th.
\end{equation}
If $|x|>2 $,
\begin{equation}
I_3 \lec \int _{|z|<|x|/2}  \frac {\tau}{|z|^2+\tau} |z|^{-2}\, |x|^{-2}\,dz
+ \int _{|z|>|x|/2}  {\tau}|z|^{-4}\, |x-z|^{-2}\,dz
\end{equation}
\begin{equation}
=C \de |x|^{-2} + C'\de^2 |x|^{-3} \lec \de |x|^{-2}.
\end{equation}
The equality here is obtained by rescaling.
\end{proof}

Finally we give a Liouville lemma.

\begin{lemma}
\label{th2.4-new}
If $v(x,t):\R^4_+ \to \R^3$ satisfies $|v(x,t)| \le C t^{-1/2}
\bka{x/\sqrt t}^{-1-\ga}$ for some $0<\ga<1$ and
\begin{equation}
\pd_t v - \De v + \nb p = 0, \quad \div v=0, \quad v|_{t=0}=0  ,
\end{equation}
for some distribution $p$, then $v \equiv 0$.
\end{lemma}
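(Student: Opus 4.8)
The plan is to eliminate the pressure and show that a solenoidal field with zero initial data and this much decay must lie, at each positive time, in a \emph{subcritical} Lebesgue space whose norm dies as $t\to0^+$; uniqueness then follows from a duality pairing against the backward heat flow. First I would record that $v$ is smooth for $t>0$. Taking the divergence of the momentum equation gives $\De p=\div(\De v-\pd_t v)=0$ distributionally, so $p(\cdot,t)$ is harmonic, hence smooth, in $x$; standard interior regularity for the Stokes system then makes $v$ smooth in $\R^4_+$, with $v$ and its derivatives decaying polynomially, and makes $\nb p=\De v-\pd_t v$ a genuine function.

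Next I would place $v$ in $L^{q_0}$ for a well-chosen $q_0$. The hypothesis $|v(x,t)|\le C t^{-1/2}\bka{x/\sqrt t}^{-1-\ga}$ gives, after the substitution $x=\sqrt t\,z$,
\[ \norm{v(\cdot,t)}_{L^{q}} \lec t^{\frac12\left(\frac3q-1\right)}, \qquad q>\tfrac{3}{1+\ga}, \]
the restriction on $q$ being exactly what makes $\int_{\R^3}\bka{z}^{-(1+\ga)q}\,dz<\I$. Since $0<\ga<1$ we have $3/(1+\ga)<3$, so I may fix $q_0\in\bke{3/(1+\ga),\,3}$; then the exponent $\tfrac3{q_0}-1>0$, so $\norm{v(\cdot,t)}_{L^{q_0}}\to0$ as $t\to0^+$, and $v\in C((0,\I);L^{q_0})$ is a solenoidal solution of the homogeneous Stokes system whose norm vanishes at the initial time.

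The core is a duality computation. Fix $t>0$ and a solenoidal $\psi\in C^\infty_c(\R^3;\R^3)$, and set $w(\cdot,\tau)=e^{(t-\tau)\De}\psi$ for $0<\tau<t$; since $\div\psi=0$ the field $w$ stays divergence free (so needs no pressure), satisfies $\pd_\tau w=-\De w$, and decays faster than any polynomial together with all its derivatives. Using $\pd_\tau v=\De v-\nb p$,
\[ \frac{d}{d\tau}\int_{\R^3} v\cdot w\,dx = \int(\De v\cdot w - v\cdot\De w)\,dx + \int p\,\div w\,dx = 0, \]
the first integral vanishing after two integrations by parts (legitimate since $v$ and its derivatives grow at most polynomially while $w$ decays rapidly) and the second because $\div w=0$. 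Hence $\tau\mapsto\int v(\cdot,\tau)\cdot w(\cdot,\tau)\,dx$ is constant on $(0,t)$. As $\tau\to t^-$ it tends to $\int v(\cdot,t)\cdot\psi\,dx$, while as $\tau\to0^+$ it is bounded by $\norm{v(\cdot,\tau)}_{L^{q_0}}\norm{e^{(t-\tau)\De}\psi}_{L^{q_0'}}\to0$, since the heat semigroup is a contraction on $L^{q_0'}$ and $\norm{v(\cdot,\tau)}_{L^{q_0}}\to0$. Thus $\int v(\cdot,t)\cdot\psi\,dx=0$ for every solenoidal test field $\psi$; as $v(\cdot,t)\in L^{q_0}_\sigma$ and solenoidal test fields are dense in $L^{q_0'}_\sigma$, this forces $v(\cdot,t)=0$, and since $t>0$ was arbitrary, $v\equiv0$.

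The main obstacle is the accounting at the two endpoints rather than the algebra: one must check that $v$ and enough of its derivatives decay well enough for the integrations by parts and for the pairing with $\nb p$, which is why the smoothing/decay step is needed. I would also flag that the seemingly easier route --- passing to $\om=\curl v$, which solves the heat equation with zero data, and invoking a Liouville theorem --- is in fact harder here: interior estimates only yield $|\om(x,t)|\lec t^{-1}\bka{x/\sqrt t}^{-1-\ga}$, and for $\ga<1$ this bound does \emph{not} tend to $0$ in any $L^q$ as $t\to0$, so the vanishing of the initial vorticity is not transparent. Working directly with $v$ in the subcritical space $L^{q_0}$, $q_0<3$, is precisely what makes the initial layer disappear.
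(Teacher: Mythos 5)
Your argument is correct, and it is genuinely different in route from the paper's, which in fact offers no proof at all: the author simply defers to \cite{JS}, Lemma 4.1(i), ``with exactly the same proof''. The argument invoked there dispenses with the distributional pressure by passing to the vorticity: $\om=\curl v$ solves the heat equation, its initial trace vanishes, uniqueness in the tempered class gives $\om\equiv 0$, and then $v(\cdot,t)$ is harmonic and decaying, hence zero. You instead work with $v$ itself, killing the pressure through $\div w=0$ and the initial layer through the subcritical integrability $v(\cdot,t)\in L^{q_0}$, $3/(1+\ga)<q_0<3$, with $\norm{v(\cdot,t)}_{L^{q_0}}\lec t^{(3/q_0-1)/2}\to 0$; the exponent bookkeeping is exactly right, and the closing step is sound: a divergence-free $v(\cdot,t)\in L^{q_0}$ annihilating all solenoidal test fields is a gradient, $v(\cdot,t)=\nb h$ with $h$ harmonic, and a harmonic gradient in $L^{q_0}(\R^3)$ vanishes. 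What your route buys is a self-contained proof that never needs heat-equation uniqueness theory for tempered distributions; what the vorticity route buys is that the pressure disappears at the very first step.

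Two remarks. First, your stated reason for avoiding the vorticity route does not hold up: one never needs $\om(\cdot,t)\to 0$ in a norm, only distributionally, and $\abs{\int \om(s)\cdot e^{(t-s)\De}\phi\,dx}=\abs{\int v(s)\cdot\curl\,e^{(t-s)\De}\phi\,dx}\lec\norm{v(\cdot,s)}_{L^{q_0}}\to 0$ --- precisely your own trick of unloading derivatives onto the heat-evolved test function makes the initial layer disappear for $\om$ as well. Second, the one compressed step is ``standard interior regularity for the Stokes system'': with $p$ merely a distribution the system admits parasitic solutions such as $v=c(t)$, $p=-c'(t)\cdot x$, which are bounded, divergence free, can have zero initial trace, and are arbitrarily rough in $t$, so no interior time-regularity can hold without invoking the spatial decay. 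Your outline survives this because the decay enters in the right places: time-mollification shows $\nb p(\cdot,t)$ is harmonic in $x$; localized heat estimates for $\om$ at scale $\sqrt t$ (bounding $\om$ by $\sup\abs{v}$ after moving the curl onto kernels) together with Biot--Savart (the difference $v-\curl(-\De)^{-1}\om$ is harmonic and decaying, hence zero) give $v$ smooth with decaying derivatives, whence $\nb p=\De v-\pd_t v$ is a decaying harmonic field and so vanishes identically --- after which your pairing runs with no pressure term at all. Note finally that spatial constants are annihilated by every compactly supported solenoidal field, so the density/Helmholtz step alone would not exclude them; it is again the membership $v(\cdot,t)\in L^{q_0}$, i.e.\ the decay hypothesis, that does.
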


It is similar to \cite[Lemma 4.1  (i)]{JS}, with exactly 
the same proof.

\section{A priori estimates for DSS solutions}

We first recall a couple estimates for \LL solutions from \cite{JS}.

\begin{lemma}[\cite{JS} Lemma 3.1]\label{th2.1}
There are constants $0<C_1< 1<C_2$ such that the following holds.
Suppose $\div u_0=0$, $A = \sup_{x_0\in\R^3}\int_{B_R(x_0)} |u_0(x)|^2
dx<\I$ for some $R>0$ and $u$ is a \LL solution with initial data
$u_0$. Then for $\la = {C_1}\min \bke{A^{-2}R^2,1}$,
\begin{equation}
\esssup_{0<t<\la R^2} \sup_{x_0\in\R^3}\int_{B_R(x_0)} |u(x,t)|^2 dx +
\sup _{x_0\in\R^3}\int_0^{\la R^2}\int_{B_R(x_0)} |\nb u(x,t)|^2 dx dt\le C_2A.
\end{equation}
Because \eqref{decay-cond} holds for $u$, for some
$p(t)=p_{x_0,R}(t)$,
\begin{equation}
\label{p-est0}
\sup _{x_0\in\R^3}\int_0^{\la R^2} \int_{B_R(x_0)} |p(x,t)-p(t)|^{3/2}
dx dt\le C_2A^{3/2} R^{1/2}.
\end{equation}
\end{lemma}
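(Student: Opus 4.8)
The plan is to recover this as the standard local energy bound for \LL solutions, via a scaling reduction, the local energy inequality supplied by suitability, a local pressure decomposition, and a continuity (bootstrap) argument. First I would reduce to $R=1$. Under the scaling $u\mapsto u^{(R)}$ from \eqref{scaling} the initial energy transforms as $A\mapsto A/R$, the target time $\la R^2$ becomes $\la$, and $\la=C_1\min(A^{-2}R^2,1)$ is left invariant, so it suffices to treat $R=1$ with $\la=C_1\min(A^{-2},1)$. Fix $x_0$ and a cutoff $\phi\in C^\infty_c(B_2(x_0))$ with $\phi=1$ on $B_1(x_0)$, $0\le\phi\le1$, $|\nb\phi|+|\nb^2\phi|\lec 1$. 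Since $u$ is suitable, the local energy inequality holds for a.e. $t$; because $\div u=0$ gives $\int u\cdot\nb\phi=0$, one may subtract an arbitrary function $p(t)$ of time from the pressure, so
\begin{equation*}
\begin{split}
\int \phi |u(t)|^2 + 2\int_0^t\!\!\int \phi|\nb u|^2
&\le \int \phi|u_0|^2 + \int_0^t\!\!\int |u|^2(\pd_s\phi+\De\phi) \\
&\quad + \int_0^t\!\!\int \bke{|u|^2 + 2(p-p(t))}\,u\cdot\nb\phi .
\end{split}
\end{equation*}
I then introduce $N(t):=\esssup_{0<s<t}\sup_{x_0}\int_{B_1(x_0)}|u(s)|^2 + \sup_{x_0}\int_0^t\!\int_{B_1(x_0)}|\nb u|^2$, and note that the $L^2_{uloc}$ structure lets me dominate integrals over $B_2(x_0)$ by $N(t)$ up to a constant (finitely many unit balls cover $B_2$), while the data term is $\le\int_{B_2(x_0)}|u_0|^2\lec A$.

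Second, I would prove the pressure estimate \eqref{p-est0}. Since $-\De p=\pd_i\pd_j(u_iu_j)$, on $B_4(x_0)$ I decompose $p=p_1+p_2+h$, where $p_1$ is the Newtonian potential of $\pd_i\pd_j(u_iu_j\,\mathbf 1_{B_4})$, $p_2$ is the far-field contribution, and $h$ is harmonic on $B_2(x_0)$. Calder\'on--Zygmund gives $\norm{p_1}_{L^{3/2}(B_2)}\lec \norm{u}_{L^3(B_4)}^2$; the far-field piece $p_2$ is controlled, after subtracting its spatial mean, by the global $L^2_{uloc}$ bound, the summation being made convergent precisely by the decay condition \eqref{decay-cond}; and $h$ together with the spatial constants is absorbed into the time function $p(t)=p_{x_0,1}(t)$. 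Integrating in time and using $\norm{u}_{L^3_{x,t}}^2\lec N(t)\lec A$ yields the space-time bound $\norm{p-p(t)}_{L^{3/2}(Q)}\lec A$, which after undoing the scaling is \eqref{p-est0}.

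Third come the nonlinear estimates and the closing argument. The dangerous terms are cubic, controlled by Gagliardo--Nirenberg on $B_2$, $\norm{u}_{L^3(B_2)}^3\lec \norm{u}_{L^2(B_2)}^{3/2}\norm{u}_{H^1(B_2)}^{3/2}$, and H\"older in time with exponents $(4,4/3)$: for $t\le1$, $\int_0^t\norm{u}_{L^3(B_2)}^3 \lec N(t)^{3/4}\,t^{1/4}\bke{\int_0^t\norm{u}_{H^1(B_2)}^2}^{3/4}\lec t^{1/4}N(t)^{3/2}$. Both the transport term $\int_0^t\!\int|u|^3|\nb\phi|$ and the pressure term $\int_0^t\!\int|p-p(t)||u||\nb\phi|\le\int_0^t\norm{p-p(t)}_{L^{3/2}}\norm{u}_{L^3}$ are handled this way (the $A$-tail in the pressure contributing lower-order terms), while the linear term is $\lec tN(t)$ and the data term is $\lec A$. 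Summing over $x_0$ gives
\begin{equation*}
N(t)\le C A + C\,t\,N(t) + C\,t^{1/4}N(t)^{3/2}.
\end{equation*}
Choosing $C_2=4C$ and $C_1$ small forces the right-hand side to stay $\le C_2A$ whenever $N(t)\le C_2A$ and $t\le\la=C_1\min(A^{-2},1)$; a continuity argument in $t$, using that $N$ is nondecreasing and starts below $C_2A$, then propagates $N(t)\le C_2A$ up to $t=\la$, which is the asserted bound.

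\textbf{Main obstacle.} The technical heart is the pressure: obtaining the local $L^{3/2}$ bound with the right constant and, above all, controlling the far-field contribution using only the $L^2_{uloc}$ norm together with the decay condition \eqref{decay-cond}, which is exactly what forces the subtraction of $p(t)=p_{x_0,R}(t)$. A secondary care-point is making the bootstrap rigorous, since the local energy inequality and the $\esssup$ defining $N(t)$ hold only for a.e.\ $t$.
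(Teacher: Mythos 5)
The paper offers no proof of this lemma to compare against: it is quoted verbatim from \cite{JS} (Lemma 3.1), whose argument in turn follows Lemari\'e-Rieusset \cite{LR}. Your sketch reproduces that standard argument faithfully: the scaling reduction to $R=1$ is sound (the initial local energy indeed transforms as $A\mapsto A/R$ and $\la=C_1\min(A^{-2}R^2,1)$ is scale-invariant), the cut-off local energy inequality, the Gagliardo--Nirenberg interpolation with H\"older exponents $(4,4/3)$ in time, the resulting inequality $N(t)\le CA+C\,t\,N(t)+C\,t^{1/4}N(t)^{3/2}$, and the continuity argument on $t\le\la$ all check out, and the exponent $A^{-2}$ in $\la$ emerges correctly from the cubic term.

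One correction of emphasis in your pressure step, which you rightly identify as the technical heart. The convergence of the far-field sum is \emph{not} what \eqref{decay-cond} provides: after subtracting the kernel at the ball's center, $|K(x-y)-K(x_0-y)|\lec |x_0-y|^{-4}$ for $x\in B_2(x_0)$ and $|y-x_0|>4$, and the sum over unit balls already converges from the uniform $L^2_{uloc}$ bound alone. What \eqref{decay-cond} actually buys --- and what the paper's phrase ``Because \eqref{decay-cond} holds for $u$'' is signalling --- is the identification of the distributional pressure with this representation \emph{modulo a function of time only}. In your decomposition the remainder $h(\cdot,t)$ is merely harmonic on $B_2(x_0)$, hence not a priori constant in $x$, so ``absorbed into $p(t)$'' is precisely the step that needs an argument; it is here, in ruling out nonconstant harmonic-in-$x$ components (parasitic solutions), that the decay at spatial infinity is used in \cite{LR,JS}. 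Since you flag this as the main obstacle, it is a mis-attribution of where \eqref{decay-cond} enters rather than a missing idea, but as written the second step of your proof does not close without it.
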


\begin{theorem}[\cite{JS} Th 3.2]\label{th2.3}
Suppose $\div u_0=0$, $\norm{u_0 }_{L^2_{uloc} }^2\le A<\I$, and
$\norm{u_0}_{C^\ga(B_2)}\le M <\I$ for some $\ga\in(0,1)$. Then
there exists $T=T(A,\ga,M)>0$ such that any \LL solution $u$ with
initial data $u_0$ satisfies $u \in
C^\ga_{par}(\wbar{B_{1/4}}\times [0,T])$
  and
\begin{equation}
\norm{u}_{C^\ga_{par}(\wbar{B_{1/4}}\times [0,T])}\le C(A,\ga,M).
\end{equation}
\end{theorem}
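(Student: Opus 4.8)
The plan is to isolate the locally-H\"older part of the data from its (possibly large) global part, control the linear evolution of the former, and treat everything else by a forward local-regularity iteration. First I would fix a divergence-free extension: using $\norm{u_0}_{C^\ga(B_2)}\le M$, produce $\bar u_0\in C^\ga(\R^3)$ that agrees with $u_0$ on $B_{3/2}$, is divergence free, and satisfies $\norm{\bar u_0}_{C^\ga(\R^3)}\lec M$ together with $\norm{\bar u_0}_{L^2_{uloc}}\le C(A,M)$ (a localized divergence correction of Bogovskii type, followed by the Leray projection). Set $b_0=u_0-\bar u_0$, which vanishes on $B_{3/2}$ and has $L^2_{uloc}$ norm controlled by $A$ and $M$. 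On the whole space the Stokes and heat semigroups coincide on divergence-free fields, so $\bar u:=e^{t\De}\bar u_0$ is divergence free and solves the homogeneous Stokes system. Since $\bar u_0\in C^\ga(\R^3)$ with controlled norm, standard heat-kernel estimates give $\bar u\in C^\ga_{par}(\R^3\times[0,\I))$ with $\norm{\bar u}_{C^\ga_{par}}\le C(M)$; this is precisely where local H\"older continuity of the data is converted into local H\"older continuity at the linear level. It then remains to show that the remainder $v:=u-\bar u$ is parabolic-H\"older on the small cylinder $\wbar{B_{1/4}}\times[0,T]$.

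On $B_{3/2}\times(0,T)$ the field $v$ solves $\pd_t v-\De v+\nb q=-\nb\cdot(u\ot u)$, $\div v=0$, with $v(\cdot,0)=b_0=0$ on $B_{3/2}$ (here $\bar u$ contributes no force, being a solution of the linear system). By Lemma \ref{th2.1}, on a time interval $T_0=T_0(A)$ we have $\norm{u}_{L^\I_t L^2_x}+\norm{\nb u}_{L^2_{t,x}}\le C(A)$ uniformly in the spatial center, hence $u\ot u$ lies in $L^\I_t L^1\cap L^2_t L^3$ locally and the pressure obeys \eqref{p-est0}. I would represent $v$ locally through the Oseen tensor: away from the boundary, $v$ equals $\Phi f$ with $f$ a cutoff version of $u\ot u$, plus a correction that is caloric and harmonic on $B_{1/4}$ (absorbing the far-field pressure controlled by \eqref{p-est0} and the commutators generated by the cutoff), the latter being smooth with bounds depending only on $A$. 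Lemmas \ref{th:Stokes} and \ref{th:Holder} then upgrade the integrability of $f$ into pointwise and H\"older bounds for $v$, and one bootstraps: $L^2_t L^3\Rightarrow v\in L^\I$, then $v\in C^\al$, and since $u\ot u=(\bar u+v)\ot(\bar u+v)$ with $\bar u$ already bounded and $C^\ga$, improved regularity of $v$ feeds back to improve $f$, closing the loop at exponent $\ga$.

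The main obstacle is running this iteration up to the initial time $t=0$, where the Caffarelli-Kohn-Nirenberg $\e$-regularity criterion is unavailable because it requires integrating over a past ($t<0$) that does not exist. The resolution is exactly the reason the decomposition is made: because $v(\cdot,0)=0$ on $B_{3/2}$ and $\bar u$ is bounded, the scaled local energy $\frac1r\int_{Q_r(x_0,0)}|v|^2$ over forward parabolic cylinders touching $t=0$ tends to $0$ as $r\to0$, uniformly in $x_0\in B_{1/4}$, after shrinking $T$. This smallness substitutes for the missing past and lets a one-sided De Giorgi/Moser iteration proceed. Quantifying this smallness and tracking that every constant depends only on $(A,\ga,M)$---in particular choosing $T=T(A,\ga,M)$ so that the nonlinear feedback stays subcritical over the iteration---is the delicate technical heart. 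Once $v$ is shown to be parabolic-H\"older with norm bounded by $C(A,\ga,M)$, the pieces recombine via $u=\bar u+v$ to give $u\in C^\ga_{par}(\wbar{B_{1/4}}\times[0,T])$ with $\norm{u}_{C^\ga_{par}(\wbar{B_{1/4}}\times[0,T])}\le C(A,\ga,M)$, as claimed.
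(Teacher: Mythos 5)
First, a point of order: the paper you are working from does not prove this theorem at all --- it is quoted verbatim from Jia--\v{S}ver\'ak \cite{JS} (their Theorem 3.2), and the paper only appends a remark correcting how the $L^{5/3}$-in-time integrability of the pressure is obtained in the proof of \cite[Th 3.1]{JS}. So your attempt has to be measured against the cited argument in \cite{JS}, and in outline your architecture does match it: split $u_0$ into a globally $C^\ga$ divergence-free piece plus a piece vanishing on $B_{3/2}$, evolve the regular piece linearly, and prove one-sided regularity up to $t=0$ for the remainder, with smallness at the initial time substituting for the missing parabolic past. That is the shape of the \cite{JS} proof, whose Theorem 3.1 is precisely the $\ve$-regularity criterion near initial time.

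However, two steps in your sketch are genuinely broken. (a) The smallness you invoke carries no information: $\frac1r\iint_{Q_r(x_0,0)}|v|^2$, with $Q_r(x_0,0)=B_r(x_0)\times(0,r^2)$, is not scale-invariant. By Lemma \ref{th2.1} alone one has $\frac1r\int_0^{r^2}\int_{B_r(x_0)}|w|^2\,dx\,dt \le C(A)\,r$ for \emph{any} local energy solution $w$, with or without vanishing initial data, so its vanishing as $r\to0$ is automatic and cannot drive an $\ve$-regularity argument. What a one-sided criterion needs is smallness of scale-critical quantities, e.g.\ $r^{-2}\iint_{Q_r(x_0,0)}|u|^3$ plus a matching pressure term --- or, in the \cite{JS} formulation, smallness of $\norm{u_0}_{L^3(B_r(x_0))}$ after zooming in, which is where $u_0\in C^\ga(B_2)$ (hence locally bounded) actually enters. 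Proving regularity from such smallness on cylinders touching $t=0$, where the Caffarelli--Kohn--Nirenberg machinery has no past, is exactly the content of \cite[Th 3.1]{JS}; your ``one-sided De Giorgi/Moser iteration'' names the difficulty but supplies no mechanism, and De Giorgi/Moser does not directly apply to Navier--Stokes because of the pressure and the critical nonlinearity (note also the paper's remark that $p\in L^{5/3}$ in time comes from \cite[(3.3)]{JS} and $u\in L^{10/3}_t$, not from the elliptic equation for $p$). (b) Your bootstrap misuses the paper's linear lemmas: Lemmas \ref{th:Stokes} and \ref{th:Holder} take \emph{pointwise} decay bounds on $f$ as hypotheses; they cannot ``upgrade the integrability'' of $f$. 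Moreover the integrability you start from is misstated and insufficient: $u\in L^\I_tL^2_x\cap L^2_tH^1_x$ locally gives $u\ot u\in L^{5/3}_{t,x}$ (and $L^2_tL^{3/2}_x$, not $L^2_tL^3_x$), which is supercritical --- no linear potential estimate yields $v\in L^\I$ from it, since otherwise every suitable weak solution would be locally regular. So the loop you propose cannot close without the $\ve$-regularity input of \cite[Th 3.1]{JS}, which is the one ingredient your sketch does not actually provide.
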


Note that in the proof of \cite[Th 3.1]{JS}, which is needed for \cite[Th 3.2]{JS}, that the pressure is in $L^{5/3}$ in time does not follow from the elliptic equation
it satisfies. Rather, it follows from \cite[(3.3)]{JS} and that the velocity is in
$L^{10/3}_t$.

We now consider DSS solutions with factor close to one.

\begin{lemma}[A priori estimates for DSS solutions with factor close to one]
\label{th3.1}\

(i) For any $0<\ga<1$ and $C_*>0$, there is $\la_*=\la_*(\ga,C_*)\in
(1,2)$ such that the following hold. Suppose $1<\la\le \la_*$ and $u$
is a forward $\la$-DSS \LL solution of \eqref{NS1} with $\la$-DSS
initial data $u_0\in C^{\ga}_{loc}(\R^3 \bs \{ 0 \})$ satisfying $\div
u_0=0$ and $\norm{u_0}_{C^\ga(\wbar B_2 \bs B_1)} \le C_*$.  Then
$v(x,t):=u(x,t) - (e^{t \De} u_0)(x)$ satisfies, for some
$C=C(\ga,C_*)$,
\begin{equation}
\label{eq3.1}\abs{ u(x,t) } <\frac{C}{|x|+\sqrt t}
, \quad \abs{ v(x,t) } <\frac{C}{\sqrt t} \bka{\frac x{\sqrt t}}^{-2},
\quad (x,t)\in \R^4_+.
\end{equation}
Moreover, $u$ is a mild solution of \eqref{NS1} and \eqref{NS2}.

(ii) If  furthermore $\norm{u_0}_{C^{1,\be}(\wbar B_2 \bs B_1)} \le C_*$ for some
$0<\be<1$, then
\begin{equation}\label{eq3.3}
|v(x,t)| \le \frac{C}{\sqrt t} \bka{\frac x{\sqrt t}}^{-3} \log
\bka{\frac {x}{\sqrt t}},  \quad |D_xv(x,t)| \ \le \frac{C}{ t}
\bka{\frac x{\sqrt t}}^{-3} 
\end{equation}
in $\R^4_+$ for some $C=C(\be,C_*)$.
\end{lemma}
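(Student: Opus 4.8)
The plan is to adapt the strategy of \cite{JS} to the time-dependent flow: reduce every pointwise bound to the near-initial-time H\"older estimate of Theorem \ref{th2.3} by parabolic rescaling, and then pass to the mild formulation to extract the sharp decay of $v$. First I would note that the DSS relation $u_0(x)=\la u_0(\la x)$ together with $\norm{u_0}_{C^\ga(\wbar B_2\bs B_1)}\le C_*$ propagates, by iterating over the annuli $\la^k\le|x|<\la^{k+1}$ and using $\la<2$, to $|u_0(x)|\lec C_*/|x|$ on $\R^3\bs\{0\}$; hence $\norm{u_0}_{L^2_{uloc}}\le C(C_*)$ and $|e^{t\De}u_0(x)|\lec C_*/(|x|+\sqrt t)$ on $\R^4_+$. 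For a point $(x_0,t_0)$ in the spatially dominated region $t_0\le T|x_0|^2$, with $T=T(\ga,C_*)$ the time furnished by Theorem \ref{th2.3}, I would rescale by $u^{(\mu)}(x,t)=\mu u(\mu x,\mu^2t)$ with $\mu=|x_0|$. The rescaled solution is again $\la$-DSS with the same local data bounds, so Theorem \ref{th2.3}, centered at a point of the unit sphere (where the data is $C^\ga$, hence away from the origin), bounds $u^{(\mu)}$ by $C(\ga,C_*)$ on a unit parabolic cylinder containing $(x_0/\mu,t_0/\mu^2)$; undoing the scaling gives $|u(x,t)|\lec C/(|x|+\sqrt t)$ wherever $t\lec|x|^2$.

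The \emph{main obstacle} is the complementary parabolic core $\{|x|\lec\sqrt t\}$, where the rescaling above would center Theorem \ref{th2.3} at the spatial origin, at which the DSS data is singular, and where --- absent a global regularity theory --- the small-time bound cannot simply be continued to order-one times. Here I would use the DSS scaling to reduce to one fundamental time-strip $t\in[\tau,\la^2\tau)$, choosing $\tau$ so that the strip lies inside the window $(0,T)$ of Theorem \ref{th2.3}, and then bound $u$ on the core part of this strip by an interior argument for suitable \LL solutions: the previous step controls $u$ on the lateral paraboloid $|x|\sim\sqrt t$, while Lemma \ref{th2.1} controls the local energy there. Turning this into a \emph{uniform} pointwise bound with constant $C(\ga,C_*)$ is the crux; I expect to need $\la-1$ small precisely so that the fundamental strip is thin in the logarithmic time variable $s=\log t$ and the core estimate becomes a controlled perturbation of the self-similar ($\la=1$) situation of \cite{JS}. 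This is also what forces the explicit dependence $\la_*=\la_*(\ga,C_*)$. Combined with the first step, this yields $|u(x,t)|\lec C/(|x|+\sqrt t)$ on all of $\R^4_+$.

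Given the uniform bound on $u$, I would next identify $u$ as a mild solution. Both $v=u-e^{t\De}u_0$ and $-\Phi(u\ot u)$ solve \eqref{v-eq1} with force $-\nb\cdot(u\ot u)$ and zero initial data; since $|u\ot u|\lec(|x|+\sqrt t)^{-2}$, Lemma \ref{th:Stokes} with $m=0$ shows $\Phi(u\ot u)$ decays like $t^{-1/2}\bka{x/\sqrt t}^{-2}$, so their difference satisfies the hypothesis of Lemma \ref{th2.4-new} and must vanish; thus $v=-\Phi(u\ot u)$, i.e. \eqref{mild-def} holds. Writing $(|x|+\sqrt t)^{-2}=t^{-1}\bke{\sqrt t/(|x|+\sqrt t)}^{2}$ and applying Lemma \ref{th:Stokes} once more gives the stated bound $|v|\lec t^{-1/2}\bka{x/\sqrt t}^{-2}$, which completes part (i).

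For part (ii) I would bootstrap the decay. The hypothesis $u_0\in C^{1,\be}$ improves the far-field behavior of $e^{t\De}u_0$ and of $\nb e^{t\De}u_0$; inserting the decay of $v$ from (i) through $u=e^{t\De}u_0+v$ into the quadratic term and re-applying Lemma \ref{th:Stokes} with $m$ raised toward $1$ --- using the extended range $4\le a<5$, $2\le b<3$ of Lemma \ref{th:cal} --- upgrades $\bka{x/\sqrt t}^{-2}$ to $\bka{x/\sqrt t}^{-3}$, the borderline exponent $a=3$ (or $b=3$) of Lemma \ref{th:cal} producing the logarithm in \eqref{eq3.3}. For the gradient I would differentiate the representation \eqref{La-def}, putting one extra $x$-derivative on $S$ (kernel $(|x|+\sqrt t)^{-5}$ by \eqref{estimate-T}), estimate $D_x(u\ot u)$ from the gradient bound supplied by $\nb e^{t\De}u_0$ together with the H\"older continuity of Lemma \ref{th:Holder} (which legitimizes differentiating the quadratic source), and integrate with Lemma \ref{th:cal} to reach $|D_xv|\lec t^{-1}\bka{x/\sqrt t}^{-3}$.
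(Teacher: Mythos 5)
Your outer-region argument and the overall skeleton (sub-paraboloid bound via Theorem \ref{th2.3} plus scaling, a core estimate forcing $\la$ near $1$, identification of $v$ with $-\Phi(u\ot u)$ via a Liouville lemma, Lemma \ref{th:cal} at borderline exponents for part (ii)) match the paper, but the step you yourself call the crux --- boundedness in the parabolic core --- is exactly what you do not prove, and your proposed mechanism would not deliver it. An ``interior argument for suitable solutions'' based on Lemma \ref{th2.1} cannot work, because CKN-type $\e$-regularity requires \emph{smallness} of the scaled local energy, which fails for large $C_*$; and ``controlled perturbation of the $\la=1$ situation of \cite{JS}'' has no ready meaning, since the $\la=1$ estimates there concern the stationary Leray system \eqref{eq1.6}, not a nearby evolution problem. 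The paper's actual mechanism --- and the actual origin of $\la_*(\ga,C_*)$ --- is different: truncate $u$ by a cutoff $\zeta$ corrected with a Bogovskii field $w$ solving $\div w=u\cdot\nb\zeta$, so that $\td u=\zeta u-w$ is a globally finite-energy suitable solution of a perturbed system with bounded, compactly supported force; the energy bound produces a time $t_1\in[1,3/2]$ with $\norm{\td u(\cdot,t_1)}_{H^1}\le C(\ga,C_*)$; local strong solvability gives a smooth solution on $[t_1,t_1+T_2)$ with lifespan $T_2=T_2(\ga,C_*)$; weak--strong uniqueness transfers its regularity to $\td u$; and the choice $\la_*^2=1+T_2/4$ ensures that a full DSS period $[t_2,\la^2 t_2]$ fits inside this regularity window, so discrete self-similarity propagates the bound to all of $\R^4_+$. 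Nothing in your proposal substitutes for this chain, and it is the heart of the lemma.

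Two further steps fail as written. First, your Liouville application is unjustified: you established only $|v|\lec t^{-1/2}\bka{x/\sqrt t}^{-1}$, while Lemma \ref{th2.4-new} requires decay $\bka{x/\sqrt t}^{-1-\ga}$ with $\ga>0$. The missing ingredient is the paper's intermediate estimate \eqref{eq3.5}: since $v(\cdot,0)=0$, the parabolic $C^\ga$ bound of Theorem \ref{th2.3} gives $|v(x,t)|\le C t^{\ga/2}$ on unit annuli, and DSS rescaling yields \eqref{eq3.8}, i.e.\ an extra factor $(\sqrt t/(|x|+\sqrt t))^{\ga}$ below the paraboloid; you invoked Theorem \ref{th2.3} for $u$ but discarded precisely this gain for $v$. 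Second, in part (ii), ``raising $m$ in Lemma \ref{th:Stokes}'' cannot improve the decay: the source contains $U\ot U$ with $U=e^{t\De}u_0$, and since $|u_0|\sim |x|^{-1}$ no smoothness assumption makes $U\ot U$ decay faster than $t^{-1}\bka{x/\sqrt t}^{-2}$. The paper instead first proves the gradient bound $|\nb u|\lec (\sqrt t+|x|)^{-2}$ by a vorticity bootstrap (local decompositions of $\om$ with cutoffs, heat-potential and elliptic estimates --- this is what the $C^{1,\be}$ hypothesis actually buys), then represents $v_i=\int_0^t\int S_{ij}(x-y,t-s)\,g_j(y,s)\,dy\,ds$ with $g=-u\cdot\nb u$, $|g|\lec (\sqrt s+|y|)^{-3}$, and applies Lemma \ref{th:cal} with $(a,b)=(3+\ell,3)$, the case $a=b=3$ producing the logarithm in \eqref{eq3.3}. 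Your alternative is circular --- estimating $D_x(u\ot u)$ presupposes the very gradient bound on $v$ being proved, and Lemma \ref{th:Holder} yields only H\"older continuity, not differentiability --- while keeping $\nb S$ and putting the extra derivative on the kernel leads to the exponent $a=5$, which lies outside the admissible range $a<5$ of Lemma \ref{th:cal} (the integral diverges logarithmically at the kernel's space-time singularity).
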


\begin{proof}
(i) We will first show a weaker estimate
\begin{equation}\label{eq3.5}
\abs{ v(x,t) } <\frac{C(\ga,C_*)}{\sqrt t}
\bka{\frac x{\sqrt t}}^{-1-\ga}, \quad (x,t)\in \R^4_+.
\end{equation}

We first consider the region below the paraboloid,
\begin{equation}
\label{sub-par}
t\le |x|^2/ R_1^2, \quad (x,t)\in \R^4_+,
\end{equation}
for some $R_1=R_1(\ga,C_*)>0$ sufficiently large to be decided later.
By Theorem \ref{th2.3}, there exists $T_1=T_1(\ga,C_*)>0$ such that for
any $x_0\in \R^3$ with $1 \le |x_0|\le \la$
\begin{equation}
\norm{u,v}^\ga_{par}(\wbar {B_{1/9}(x_0)} \times [0,T_1]) \le C(\ga,C_*),
\end{equation}
where we have used that $e^{t\De}u_0$ satisfies the same H\"older
estimate. Since $v(x,0)=0$, we get
\begin{equation}
|v(x,t)| \le C(\ga,C_*)t^{\ga/2}, \quad \frac 89 \le |x| \le \la+\frac 19,
\quad 0 \le t \le T_1.
\end{equation}
Since $u^{(\la^k)}$, $k\in \Z$, is another \LL solution with same
initial data $u_0$, the above estimate remains valid with $v$ replaced
by $v^{(\la^k)}$. Scaling back,
\begin{equation}\label{eq3.8}
| v(x,t)| < \frac{C(\ga,C_*)\,t^{\ga/2}}{(\sqrt{t}+|x|)^{1+\ga}},
\end{equation}
in cylinder $C_k=\{x: \frac 89 \la^k \le |x| \le \la^{k+1} \} \times
[0,\la^{2k}T_1]$ for every $k\in \Z$.  Since $ \frac 89 <\la < 2$, the
union $\cup_{k\in \Z}C_k$ contains a set of the form in
\eqref{sub-par} with $R_1=2 T_1^{-1/2}$.

For the complement, by rescaling it suffices to prove an upper bound
in the region
\begin{equation}\label{inner-region}
t>  |x|^2/R_1^2, \quad 1 \le t\le \la^2.
\end{equation}
This region satisfies $|x|< \la R_1 $. Let $T=4$. (We will take
$T=4\la^2$ for the proof of Lemma \ref{th3.2}.) By Lemma \ref{th2.1},
$u \in (L^\I_t L^2_x \cap L^2_t H^1_x )\subset L^{10/3}_{t,x} $ in
$B_{\la R_1} \times (0,T)$ with the norm bounded by $
C(\ga,C_*)$. Together with the decay for $e^{t\De}u_0$ and estimate
\eqref{eq3.8} for $v$ in the region \eqref{sub-par}, we get $
\norm{u}_{ L^{10/3}_{t,x}(\R^3 \times (0,T))} \le C(\ga,C_*)$.
Since $p = (-\De)^{-1} \pd_i\pd_j u_i u_j$, we get
\begin{equation}
\label{p.global}
\norm{p}_{ L^{5/3}_{t,x}(\R^3 \times (0,T))} \le C(\ga,C_*).
\end{equation}
Denote shrinking annuli $A_k = \{ x\in \R^3: 2\la R_1 +k< |x| < 2\la R_1+20-k\}$
for $k =1,2,\ldots$.  Note that $u \in L^\I(A_1 \times [\frac 12,T])$
since the region is contained in the region \eqref{sub-par}. By
regularity theory for \eqref{NS1}, $D^\ell_x u \in L^\I(A_2 \times
[1,T])$ for $\ell \le 3$. By $-\De p = (\pd_iu_j)(\pd_j u_i) $ and
\eqref{p.global}, we get $D^\ell p \in L^\I (A_3 \times [1,T])$
for $\ell \le 2$. By \eqref{NS1}, $D^\ell u_t \in L^\I (A_{3} \times
[1,T])$ for $\ell \le 1$.

Choose a smooth cutoff function $\zeta(x) \ge 0$ with $\zeta(x)=1$
when $|x| <2\la R_1+5$ and $\zeta(x)=0$ when $|x|>2\la R_1+6$. Let
$w(x,t)$ be a solution supported in $x \in A_{4}$ of
\begin{equation}
\div w(x,t) =u(x,t) \cdot \nb \zeta(x), \quad \norm{ w(\cdot,t)}_{H^2} \lec
\norm{u(\cdot,t) }_{H^1(A_3)}\quad \forall 1\le t\le T,
\end{equation}
by a construction uniform in $t$, see e.g.~\cite[\S III.3]{Galdi1}.
In particular $\div \pd_t w = \pd_t u \cdot \nb \zeta$ and
$\norm{\pd_t w(\cdot,t)}_{H^1} \lec \norm{\pd_t u(\cdot,t)}_{L^2(A_3)}$.  Let
\begin{equation} 
\td u=\zeta u - w .
\end{equation}
 One can check that $\td u$ is a suitable weak solution of
\begin{equation}\label{cutoff.eq}
\pd _t \td u -\De \td u + (\td u \cdot \nb) \td u + \nb \td p =f ,
\quad \div \td u = 0,
\end{equation}
satisfying
\begin{equation}
\td u(x,t)=0 \quad \text{if} \quad |x|>2\la R_1+6;\quad
f(x,t)=0 \quad \text{if} \quad x \not \in A_{4},
\end{equation}
\begin{equation}
\norm{f}_{L^\I_t H^1_x(\R^3 \times (1,T))} \le C(\ga,C_*),
\end{equation}
and by  Lemma \ref{th2.1}
\begin{equation}
\label{th3.1-cutoffbound}
\esssup_{0<t<T} \int_{\R^3} |\td u(x,t)|^2 dx + \int_0^{T} \int_{\R^3}
|\nb \td u|^2 dx\, dt< C(\ga,C_*).
\end{equation}

By \eqref{th3.1-cutoffbound}, there exists a time $t_1 \in [1,3/2]$ such
that $\td u(\cdot,t_1) \in H^1(\R^3)$ with $\norm{\td u(\cdot,t_1)}_{H^1} <
C(\ga,C_*)$. Thus there is $T_2=T_2(\ga,C_*)>0$ and a
strong solution $\hat u(x,t): \R^3 \times [t_1,t_1+T_2) \to \R^3$ of
  \eqref{cutoff.eq} with initial condition $\hat u(x,t_1) = \td u(x,t_1)$.
We may assume $T_2<1$.

By weak-strong uniqueness, we  have $\td u(x,t)=\hat u(x,t)$ for $(x,t)
\in  \R^3 \times [t_1,t_1+T_2)$.

If we take $t_2 = t_1+T_2/4<2$ and $\la_*^2=1+T_2/4$, we have
$[t_2,t_2\la^2] \Subset (t_1,t_1+T_2)$ whenever $t_1\in[1,3/2]$ and
$1<\la \le \la_*$.  All spatial derivatives of $\td u$ are a priori
bounded in $\R^3 \times [t_2,t_2\la^2]$. Because $u$ is discretely
self-similar and agrees with $\td u$ in the region
\eqref{inner-region}, we get $|u(x,t)|\le C$ in the region
\eqref{inner-region}.

We have shown the weaker estimate \eqref{eq3.5}. We now show
\eqref{eq3.1}.  Let $f=- u \ot u$.  By $|(e^{t\De}u_0)(x)| \lec
t^{-1/2}\bka{ t^{-1/2} x}^{-1}$ and \eqref{eq3.5},
\begin{equation}
|f(x,t)| \lec \frac 1{x^2+t}, \quad (x,t) \in \R^4_+.
\end{equation}
By Lemma \ref{th:Stokes} (with $m=0$), we get $|\Phi f(x,t)| \lec
t^{-1/2} \bka{x/\sqrt t}^{-2}$, where $\Phi$ is defined in
\eqref{La-def}. Note $\td v = v- \Phi f$ satisfies the bound
\eqref{eq3.5} and the linear Stokes system in $\R^4_+$ with zero
initial data and zero source. By Lemma \ref{th2.4-new},
$\td v \equiv 0$. Thus we have $v=\Phi f$ and
\eqref{eq3.1}.

(ii) Suppose now $u_0$ is in $C^{1,\be}_{loc}$ and $\norm{\nb
  u_0}_{C^{\be}(\wbar {B(x_0,1/2)})} \le C C_*$ for any $x_0 \in \R^3$
with $1\le |x_0| \le \la$. The vorticity $\om=\curl
u$ satisfies
\begin{equation}
\pd_t \om - \De \om =- \curl \nb \cdot (u \ot u),
\end{equation}
and $\om_0 = \om(\cdot,0)$ satisfies $\norm{\om_0}_{
  C^\be(\wbar{B(x_0,{1/2})})}\le CC_*$.  

Denote $r_k = \frac 19 - \frac k{100}$ and $Q_k = B(x_0,r_k) \times
(0,T_1]$ for $k=0,1,\ldots$.  The previous step shows $u \in
C^\ga_{par}(\wbar{Q_0})$.

Let $\eta(x)=\eta_0(x-x_0)$ where $\eta_0(x)$ is a fixed smooth
cut-off function with $\eta_0(x)=1$ for $|x|<r_2$ and $\eta_0(x)=0$
for $|x|> r_1$.  Decompose $\om =\om_1+\om_2+\om_3$ where
\begin{equation}
  \begin{split} 
\om_1(\cdot ,t)& = -\int_0^t e^{\De(t-s)} [\curl \nb \cdot (u \ot u\eta)
](\cdot,s)ds, \\ 
\om_2(\cdot ,t)&=e^{t\De} (\eta \om(\cdot,0)),
  \\ \om_3 &= \om - \om_1-\om_2.
  \end{split}
\end{equation}
Note $(\pd_t - \De) \om_3 = 0$ in $Q_{2}$ and $\om_3(x,0)=0$ for $x\in
B_{r_2}(x_0)$.  Thus, with possibly a smaller $T_1$, both $\om_2$ and
$\om_3$ are bounded in $C^{\be}_{par}(\wbar{Q_{3}})$ by $C(\be,C_*)$.
By singular integral estimates for heat equation, we have
\begin{equation}
\norm{\om_1}_{L^q(\R^3 \times [0,T_1])} \le C
\norm{u \ot u\eta}_{L^q(\R^3 \times [0,T_1])} \le C(\be,C_*,q)
\end{equation}
for any $1<q<\I$. We take $q=\frac 5{1-\be}$.  Thus $\om=\sum_{i=1}^3 \om_i \in
L^q(Q_{3})$.  By elliptic estimate,
\begin{equation}
\norm{\nb u}_{L^q(Q_{4})} \le C\norm{\curl u}_{L^q(Q_3)} +
C\norm{\div u}_{L^q(Q_3)} + C \norm{u}_{L^q(Q_3)} \le C(\be,C_*).
\end{equation}

We now do a similar decomposition of $\om$ with $\eta_0(x)=1$ for
$|x|< r_6$ and $\eta_0(x)=0$ for $|x|> r_5$.  Again $\om_2$ and
$\om_3$ are in $C^{\be}_{par}(\wbar{Q_{7}})$. Rewrite
\begin{equation}
\nb \cdot (u \ot u\eta) = u \cdot \nb (u \eta) \in L^q(\R^3 \times [0,T_1]).
\end{equation}
By heat potential estimate 
\begin{equation}
[\om_1]_{C^\be_{par}(\R^3 \times [0,T_1])} \le C
\norm{u \cdot \nb (u \eta)}_{L^q(\R^3 \times [0,T_1])} \le C(\be,C_*).
\end{equation}
 Thus $\om=\sum_{i=1}^3 \om_i \in
C^\be_{par}(\wbar {Q_{7}})$. By elliptic estimate,
\begin{equation}
\norm{\nb u}_{L^\I(0,T_1; C^\be(\wbar{B(x_0,r_8)})} \le 
C\norm{\curl u} +
C\norm{\div u} + C \norm{u} \le C(\be,C_*)
\end{equation}
where the middle norms are $C^\be_{par}(\wbar {Q_{7}})$-norms. In
particular we have shown
\begin{equation}
|\nb u(x,t)| \le C(\be,C_*), \quad (1\le |x|\le \la , \quad 0 \le t
\le T_1).
\end{equation}
By the same scaling argument for \eqref{eq3.8}, we get
\begin{equation}
|\nb u(x,t)| \le \frac{C(\be,C_*) }{(\sqrt t + |x|)^2}
\end{equation}
in the sub-paraboloid region \eqref{sub-par}.

We now rewrite
\begin{equation}
v_i(x,t) = \int_0^t \int_{\R^3}S_{ij}(x-y,t-s)g_j(y,s)\,dyds
\end{equation}
with
\begin{equation}
g=- u \cdot \nb u, \quad |g(y,s)|\le \frac{C(\be,C_*) }{(\sqrt s + |y|)^3}.
\end{equation}
Thus for $\ell=0,1$, by \eqref{estimate-T} and change of variables
$x=\sqrt t \td x$, $y=\sqrt t \td y$, $s=t \td s$,
\begin{equation}
|D^\ell v(x,t)| \le \int_0^t \int_{\R^3} (|x-y|+\sqrt{t-s})^{-3-\ell}
(\sqrt s + |y|)^{-3} \,dyds
\end{equation}
\begin{equation}
= t^{-(1+\ell)/2} \int_0^1 \int_{\R^3} (|\frac x{\sqrt t}-y|+\sqrt{1-s})^{-3-\ell}
(\sqrt s + |y|)^{-3} \,dyds.
\end{equation}
By Lemma \ref{th:cal}, we get \eqref{eq3.3}.
\end{proof}

\medskip

We next consider axisymmetric DSS flow with no swirl.

\begin{lemma}
[A priori estimates for axisymmetric DSS flow with no swirl]
\label{th3.2}\ 

(i) For any $1<\la< \I$, $0<\ga<1$, and  $C_*>0$, suppose $u$ is a
forward $\la$-DSS \LL solution of \eqref{NS1} with $\la$-DSS initial
data $u_0\in C^{\ga}_{loc}(\R^3 \bs \{ 0 \})$ that is axisymmetric
with no swirl, DSS with factor $\la$, $\div u_0=0$, and
$\norm{u_0}_{C^\ga(\wbar B_{\la} \bs B_1)} \le C_*$.  Then $u$
satisfies \eqref{eq3.1} with constant $C=C(\la,\ga,C_*)$.
Moreover, $u$ is a mild solution of \eqref{NS1} and \eqref{NS2}.

(ii) If furthermore $\norm{u_0}_{C^{1,\be}(\wbar B_\la \bs B_1)} \le
C_*$ for some $0<\be<1$, then \eqref{eq3.3} hold
with constant $C=C(\la,\be,C_*)$.
\end{lemma}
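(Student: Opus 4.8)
The plan is to mirror the proof of Lemma \ref{th3.1}, isolating the single place where the restriction $\la\le\la_*$ was used and replacing the short-time strong solution there by a globally regular one supplied by the axisymmetric no-swirl structure. The sub-paraboloid estimate \eqref{eq3.5} carries over verbatim: Theorem \ref{th2.3} applied on the annulus $\wbar B_\la\bs B_1$ (now with $1\le|x_0|\le\la$ and constant $C(\la,\ga,C_*)$) together with $v(x,0)=0$ gives $|v(x,t)|\le C t^{\ga/2}$ for $\frac 89\le|x|\le \la+\frac 19$, $0\le t\le T_1$, and applying the DSS scaling $v^{(\la^k)}$ as in \eqref{eq3.8} covers the region \eqref{sub-par} with $R_1=R_1(\la,\ga,C_*)$. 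Likewise, once the inner-region bound $|u|\le C$ on \eqref{inner-region} is in hand, the passage to \eqref{eq3.1} via $f=-u\ot u$, Lemma \ref{th:Stokes} (with $m=0$) and the Liouville Lemma \ref{th2.4-new}, as well as the whole of part (ii), are identical to Lemma \ref{th3.1}. So the only genuinely new work is the inner-region bound for arbitrary $\la$.

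For the inner region I take $T=4\la^2$ (as announced after the statement of Lemma \ref{th3.1}), so that $[1,\la^2]$ sits well inside $(0,T)$ with room to spare. Lemma \ref{th2.1} then gives $u\in L^{10/3}_{t,x}$ and, via $p=(-\De)^{-1}\pd_i\pd_j u_iu_j$, $p\in L^{5/3}_{t,x}$ on $\R^3\times(0,T)$, with norms $C(\la,\ga,C_*)$; near $|x|\approx 2\la R_1$ the solution is smooth because that collar lies in the sub-paraboloid region \eqref{sub-par} (there $|x|^2/R_1^2\gtrsim 4\la^2=T$). I then build the cut-off field $\td u=\zeta u-w$ exactly as before, solving the forced system \eqref{cutoff.eq} with $\div\td u=0$, compactly supported in $x$, with forcing $f$ supported in the smooth collar and bounded in $L^\I_tH^1_x$, and with the energy bound \eqref{th3.1-cutoffbound} on $\R^3\times(0,T)$. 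The one thing I must arrange carefully is that the localization preserve the symmetry: choosing $\zeta=\zeta(|x|)$ radial and constructing $w$ to be axisymmetric with no swirl (the Bogovskii-type correction of \cite[\S III.3]{Galdi1} respects this when its datum $u\cdot\nb\zeta$ is axisymmetric), the field $\td u$ and the force $f$ are again axisymmetric with no swirl.

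With symmetry in hand I replace the short-time strong solution of Lemma \ref{th3.1} by a globally regular one. As before, \eqref{th3.1-cutoffbound} furnishes a time $t_1\in[1,3/2]$ with $\td u(\cdot,t_1)\in H^1(\R^3)$, which is axisymmetric with no swirl. Because $f$ is smooth, compactly supported and carries the same symmetry, the global regularity theory for axisymmetric no-swirl flows of Ukhovskii--Yudovich \cite{UY} and Ladyzhenskaya \cite{Lad} (see also \cite{LMNP}) applies on all of $[t_1,T]$: the swirl stays identically zero, the quantity $\om^\th/r$ obeys a forced transport/maximum-principle bound, and one obtains a smooth solution $\hat u$ on $\R^3\times[t_1,T]$ with all spatial derivatives a priori bounded by $C(\la,\ga,C_*)$. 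Weak-strong uniqueness identifies $\td u=\hat u$ on this interval. Since $\td u=u$ in the region \eqref{inner-region} and $[1,\la^2]\Subset(t_1,T)$, we conclude $|u(x,t)|\le C(\la,\ga,C_*)$ throughout \eqref{inner-region}, completing the proof of \eqref{eq3.1}; part (ii) then follows word for word as in Lemma \ref{th3.1}(ii).

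The main obstacle is precisely this global-regularity step: I must make sure the classical axisymmetric no-swirl theory survives the inhomogeneity. Concretely, I have to verify (a) that $f$ inherits the no-swirl structure so that the equation for the swirl component remains homogeneous and the swirl stays identically zero, and (b) that the a priori bound for $\om^\th/r$ --- the heart of the Ukhovskii--Yudovich/Ladyzhenskaya argument --- still closes in the presence of the smooth, compactly supported forcing, yielding constants depending only on $\la,\ga,C_*$ (through the $H^1$ size of $\td u(\cdot,t_1)$ and the $L^\I_tH^1_x$ size of $f$). Everything else is a faithful transcription of Lemma \ref{th3.1}; the payoff is that, with global regularity available in place of mere short-time existence, no smallness of $\la-1$ is needed and $\la$ may be taken arbitrarily large.
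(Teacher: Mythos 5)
Your route coincides with the paper's in every respect but one, and that one place is a genuine gap: you have discarded exactly the step that makes the appeal to the classical axisymmetric theory legitimate. You propose to apply the Ukhovskii--Yudovich/Ladyzhenskaya global regularity theory directly to the datum $\td u(\cdot,t_1)\in H^1(\R^3)$, replacing the short-time strong solution of Lemma \ref{th3.1}. But, as the paper itself records in the introduction, these global regularity theorems --- and in particular \cite[Th 1]{LMNP}, which is what the actual proof invokes --- are stated for $H^2$ initial data, not $H^1$. The paper therefore \emph{keeps} the short-time strong solution step rather than replacing it: from $\td u(\cdot,t_1)\in H^1$ one gets a strong solution of \eqref{cutoff.eq} on $[t_1,t_1+T_2)$ agreeing with $\td u$ by weak-strong uniqueness; at $t_2=t_1+T_2/4$ all spatial derivatives of $\td u(\cdot,t_2)$ are a priori bounded, and the compact support then yields $\norm{\td u(\cdot,t_2)}_{H^2(\R^3)}\le C(\ga,C_*)$; only now does \cite[Th 1]{LMNP} give $\norm{\td u(\cdot,t)}_{H^1}\le C(\ga,C_*)$ and $\norm{\td u(\cdot,t)}_{H^2}\le C(\ga,C_*,t)$ for $t_2\le t\le T=4\la^2$, an interval containing the full period $[t_2,t_2\la^2]$. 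Note also that \cite[Th 1]{LMNP} is applied to the forced system \eqref{cutoff.eq}, so the inhomogeneity you flag in your obstacle (b) is absorbed by the citation; your proposal instead leaves the forced $\om^\th/r$ estimate from $H^1$ data as unverified work, which is precisely what the $H^2$ upgrade via the short-time solution renders unnecessary. The repair is simply to reinstate that step, not to replace it.

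Two smaller points. The containment $[1,\la^2]\Subset(t_1,T)$ is false as written, since $t_1$ may be as large as $3/2$; what you actually obtain is boundedness of $\td u=u$ in the inner paraboloid region for $t\in[t_2,t_2\la^2]$, a full DSS period, and the discrete self-similarity of $u$ then transfers the bound to all of \eqref{inner-region} --- this is how the paper concludes, both here and in Lemma \ref{th3.1}. On the positive side, your insistence that the localization preserve the symmetry (radial cutoff $\zeta$, axisymmetric no-swirl corrector $w$ so that $\td u$ and $f$ remain axisymmetric with no swirl, item (a)) addresses a point the paper leaves implicit, and is handled correctly; the remainder of your argument --- the sub-paraboloid step with $x_0$ ranging over $\wbar B_\la\bs B_1$, the choice $T=4\la^2$, and the passage from the inner-region bound to \eqref{eq3.1} and to part (ii) --- matches the paper verbatim.
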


\begin{proof}
We use the same proof of Lemma \ref{th3.1} (i) until we get a time
$t_1 \in [1,3/2]$, a $T_2=T_2(\ga,C_*)\in (0,1)$ and that $\td u$ agrees
with a strong solution in $[t_1,t_1+T_2)$. For $t_2 = t_1 + T_2/4$, all
  spatial derivatives of $\td u(x, t_2)$ are a priori bounded. Since
  $\td u$ has compact spatial support, we get
\begin{equation}
\norm{\td u(\cdot, t_2)}_{H^2(\R^3)} \le C(\ga,C_*).
\end{equation}
By \cite[Th 1]{LMNP}, 
\begin{equation}
\norm{\td u(\cdot, t)}_{H^1(\R^3)} \le C(\ga,C_*), \quad
\norm{\td u(\cdot, t)}_{H^2(\R^3)} \le C(\ga,C_*,t), 
\end{equation}
for $t_2 \le t \le T=4\la^2$, which contains the interval $[t_2,
  t_2\la^2]$.  Since $u$ is $\la$-DSS and $u$ agrees with $\td u$ in the
region $t \ge |x|^2/R_1^2$ and $1 \le t \le T$, we have shown
the boundedness of $u$ in the same region. The rest of the
proof of Lemma \ref{th3.1} then goes through.
\end{proof}


\section{Uniqueness of DSS solutions with small data}

When the initial data $u_0$ is small in $L^{3,\I}(\R^3)$, the
existence theorem of \cite{GM, CMP, CP} says that there is a
unique mild solution $\um(x,t)$ in the class
\begin{equation}
\norm{\um(t)}_{BC_w([0,\I); L^{3,\I}(\R^3))} \le C \norm{u_0} _{
  L^{3,\I}(\R^3)}.
\end{equation}
Above $BC_w$ means bounded weak-star continuous $L^{3,\I}$-valued
functions of time. It is also known that $\um$ is a \LL solution, see
\cite{LR}. However, for our application later, we will need uniqueness
in a larger class of solutions.

\begin{lemma}\label{th3.3}
Let $u_0$ satisfy the assumptions of Lemma \ref{th3.1} or Lemma
\ref{th3.2} with $C_*$ sufficiently small. Then any $\la$-DSS \LL solution
$u(t)$ of \eqref{NS1} with initial data $u_0$ must agree with the mild
solution $\um$ constructed by \cite{GM, CMP, CP}.
\end{lemma}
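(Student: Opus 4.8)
The plan is to establish uniqueness by showing that any $\lambda$-DSS local-Leray solution $u$ with small initial data must coincide with the mild solution $\um$ via a contraction-type argument for their difference. The key is that Lemma \ref{th3.1} (or \ref{th3.2}) gives an a priori bound on $u$ in the class \eqref{eq1.12}$_1$, namely $|u(x,t)| \lesssim (|x|+\sqrt t)^{-1}$ with constant $C=C(C_*)$, and moreover asserts that $u$ is itself a mild solution. Once both $u$ and $\um$ are mild solutions lying in a common scaling-invariant space in which $\Phi(u\ot v)$ is bounded, their difference $w = u - \um$ satisfies the linear-in-$w$ integral identity
\begin{equation}
w = -\Phi(w \ot u) - \Phi(\um \ot w),
\end{equation}
and the goal is to show $w \equiv 0$ by absorbing the right-hand side into the left.

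First I would fix the right Banach space. Because both solutions obey $|u|,|\um| \lesssim (|x|+\sqrt t)^{-1}$, the natural choice is $X = \{ u : \sup_{(x,t)\in\R^4_+} (|x|+\sqrt t)\,|u(x,t)| <\I\}$, the scaling-critical pointwise space used implicitly in \eqref{eq1.12}$_1$; this is exactly the class in which \cite{GM,CMP,CP} prove unique existence, so $\Phi(u\ot v)$ is bounded on $X\times X$ with a universal operator norm $C_0$. Here the smallness of $C_*$ enters decisively: the a priori estimate of Lemma \ref{th3.1}(i) shows $\norm{u}_X \le C(C_*)$, and by inspecting that estimate one sees $C(C_*)\to 0$ as $C_*\to 0$ (the bound is driven by the size of the data and the bilinear term is quadratic). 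Likewise $\norm{\um}_X \le C\norm{u_0}_{L^{3,\I}} \lesssim C_*$.

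Then the contraction closes immediately: from the displayed identity,
\begin{equation}
\norm{w}_X \le C_0\bke{\norm{u}_X + \norm{\um}_X}\norm{w}_X \le C_0\, C(C_*)\,\norm{w}_X,
\end{equation}
and choosing $C_*$ small enough that $C_0\,C(C_*) < \tfrac12$ forces $\norm{w}_X = 0$, hence $u = \um$. The main obstacle I anticipate is not the contraction arithmetic but justifying that the a priori-bounded local-Leray solution $u$ genuinely satisfies the mild integral equation \eqref{mild-def} in $X$ — i.e. that being a distributional suitable solution with the pointwise decay \eqref{eq1.12}$_1$ upgrades to the Duhamel representation $u = e^{t\De}u_0 - \Phi(u\ot u)$. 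Fortunately this is precisely what the last sentence of Lemma \ref{th3.1}(i) (``$u$ is a mild solution'') and of Lemma \ref{th3.2}(i) already supply, so one may invoke it directly; the difference $\td v = v - \Phi f$ argument via the Liouville Lemma \ref{th2.4-new} is exactly the mechanism that converts the local-Leray solution into a mild one. With both representations in hand over the same space $X$, subtracting them and applying the boundedness of $\Phi$ completes the proof.
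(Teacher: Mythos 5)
There is a genuine gap, and it sits exactly at the step you call decisive: the claim that the constant $C(C_*)$ in the a priori bound $\norm{u}_X \le C(C_*)$ from Lemma \ref{th3.1}(i) tends to $0$ as $C_* \to 0$. Nothing in the paper supports this, and your parenthetical justification (``the bound is driven by the size of the data and the bilinear term is quadratic'') does not prove it. That constant is produced by Theorem \ref{th2.3} (the local H\"older estimate of \cite{JS}, whose constant $C(A,\ga,M)$ is not tracked quantitatively), by the strong-solution and weak-strong-uniqueness step in the inner region $t > |x|^2/R_1^2$, and by rescaling; none of these yields smallness of the output when the data are small. Indeed, the remark immediately following Lemma \ref{th3.3} makes the point explicitly: $u$ is allowed to be \emph{large} in $BC_w([0,\I); L^{3,\I}(\R^3))$, and ``large equals small'' is precisely the nontrivial content of the lemma --- so assuming $\norm{u}_X$ is small essentially assumes the conclusion. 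Without that smallness your absorption $\norm{w}_X \le C_0\bke{\norm{u}_X + \norm{\um}_X}\norm{w}_X$ cannot close: only $\norm{\um}_X \lec C_*$ is genuinely small (that part of your argument is correct, by the contraction construction of \cite{GM,CMP,CP}), while $C_0 \norm{u}_X$ may well exceed $1$. The parts of your proposal that are sound --- invoking the ``mild solution'' conclusion of Lemmas \ref{th3.1}/\ref{th3.2} and the subtraction identity $w = -\Phi(w\ot u) - \Phi(\um \ot w)$ --- do not rescue the absorption step.

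The paper's proof is designed precisely to avoid needing smallness of $u$. It sets $W = \um$, $v = u - \um$, and runs a localized energy estimate for $v$ over one DSS period $\R^3 \times [1,\la^2]$, testing the perturbed equation \eqref{th3.3-eq2} with $v\zeta_R$. Three structural facts make it work: (a) the DSS property turns the time-boundary term into $\frac{\la-1}{2}\int |v(x,1)|^2\,dx \ge 0$, which has a favorable sign and can be dropped; (b) the quadratic-in-velocity transport term $(W+v)\cdot \nb v$ pairs with $v$ as a pure flux, so together with the pressure it goes into $I_R$ and vanishes as $R \to \I$ thanks to the decay estimates \eqref{eq3.1} and $p \in L^\I_t L^q_x$ --- no absorption of $u$-sized terms is ever needed; (c) the only term requiring absorption is $\iint v \ot W : \nb v$, and by Hardy's inequality it is bounded by $C C_* \iint |\nb v|^2$, small because only $W = \um$ (not $u$) must be small. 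This yields $\iint |\nb v|^2 = 0$ for small $C_*$, hence $v \equiv 0$. To salvage your Duhamel-based route you would have to prove the quantitative smallness $C(C_*) \to 0$, which would mean redoing the regularity theory behind Theorem \ref{th2.3} with explicit constants; the energy argument sidesteps this entirely.
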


In particular, $u(t)$ is allowed to be large in $BC_w([0,\I);
  L^{3,\I}(\R^3))$.  Such a statement that ``large equals small'' is
  not known for general solutions with small $L^{3,\I}$ data. The
  lemma is only for DSS solutions and relies on the estimates of
  Lemmas  \ref{th3.1} and \ref{th3.2}.

\begin{proof} 
Denote $W(x,t) = \um(x,t)$ and $v(x,t)=u(x,t)-\um(x,t)$. They are both
DSS with factor $\la$ and satisfy \eqref{eq3.1}. Thus
\begin{equation}
| W(x,t)| < \frac {C}{(|x|+\sqrt t)}, \quad
| v(x,t)| < \frac {C\sqrt t}{(|x|+\sqrt t)^{2}},
\end{equation}
in $\R^4_+$.  Note that $v$ satisfies
\begin{equation}
\label{th3.3-eq2}
\pd_tv-\De v+(W+v)\cdot \nb v + v \cdot \nb W+\nb p =0, \quad \div v=0.
\end{equation}
We have $-\De p = \sum _{i,j}\pd_i \pd_j ((W + v)_i v_j + v_i W_j)$
and hence $\norm{p(\cdot,t)}_{L^q(\R^3)} \lec
\norm{\bka{x}^{-3}}_{L^q(\R^3)} < \I$ for $1\le t\le \la^2$ and
$1<q<\I$.

Let $\zeta_R(x)=\zeta(x/R)$ where $ \zeta(x)\ge 0$ is a smooth cut off
function with $\zeta(x)=1$ for $|x|<1$ and $\zeta(x)=0$ for $|x|>2$.
Multiplying \eqref{th3.3-eq2} by $v \zeta_R$ and integrating by parts
over $\R^3 \times [1,\la^2]$, we get
\begin{equation}
\label{eq3.20}
\bkt{ \int_{\R^3} \frac { |v(x,t)|^2}2 \zeta_R(x)\,dx}_{t=1}^{\la^2} +
\int_1^{\la^2} \int_{\R^3} \bke{|\nb v|^2 \zeta_R - v \ot W : (\nb v )
  \zeta_R }\,dxdt =I_R
\end{equation}
where
\begin{equation}
I_R:= \int _1^{\la^2} \int _{\R^3} \frac 12 |v|^2 \De
\zeta_R + (\frac {|v|^2}2(W+v) + (W \cdot v+p)v)  \cdot \nb \zeta_R \,dxdt .    
\end{equation}
By Lemma \ref{th3.1} and $p \in L^\I_t L^q_x$, $I_R$ converges to
$0$ as $R \to \I$. The first term in \eqref{eq3.20} also converges and
it converges to
\begin{equation}
\bkt{ \int_{\R^3}
  \frac { |v(x,t)|^2}2\,dx}_{t=1}^{\la^2} = \frac{\la-1}2\int
|v(x,1)|^2 dx \ge 0.
\end{equation}
We have shown
\begin{equation}
\int_1^{\la^2} \int_{\R^3} \bke{|\nb v|^2 \zeta_R - v \ot W : (\nb v )
  \zeta_R }\,dxdt \le o(1) \quad 
\end{equation}
as $R \to \I$. Since
\begin{equation}
\abs{\iint v \ot W : (\nb v ) \zeta_R \,dxdt} \le \frac 12 \iint |\nb
v|^2 \zeta_R + C \iint |vW|^2 \zeta_R
\end{equation}
and the last term converges as $R \to \I$, we get $\iint |\nb v|^2
\zeta_R \to \iint |\nb v|^2<\I$ as $R \to \I$ and by Hardy inequality
\begin{equation}
\iint |\nb v|^2 \le \iint v \ot W : \nb v \lec C_* \bigg(\iint
\frac{|v|^2}{x^2}\bigg)^{\frac 12} \bigg(\iint |\nb v|^2\bigg)^{\frac 12}
\lec C_*\iint |\nb v|^2.
\end{equation}
Thus when $C_*$ is sufficiently small we get $\int_1^{\la^2} \int
_{\R^3} |\nb v|^2 \,dxdt= 0$. Thus $v \equiv 0$.
\end{proof}

\section{Existence of large DSS solutions}

In this subsection we prove Theorems \ref{th1.1} and \ref{th1.2}.  Let
\begin{equation}
U(x,t) =  (e^{t \De} u_0)(x).
\end{equation}
By the assumption on $u_0$, $U$ is $\la$-DSS and
\begin{equation}
|U(x,t)| \le \frac {CC_*}{\sqrt t}\bka{\frac x{\sqrt t}}^{-1} = \frac
{C C_*}{\sqrt {x^2+ 2t}}.
\end{equation}

Introduce a parameter $\sigma \in [0,1]$.
We look for a solution $u(x,t)$ of \eqref{NS1} of the form
\begin{equation}
u(x,t) = \sigma U(x,t) + v(x,t), \quad u(x,0)=\sigma u_0(x).
\end{equation}
The difference $v$ satisfies the nonhomogeneous Stokes system
\eqref{v-eq1}
with
\begin{equation}
\label{F-def}
f=- (\sigma U + v)\ot(  \sigma U+v).
\end{equation}

We expect (which is true at least for small $\si$) that $v$ is $\la$-DSS and
\begin{equation}\label{F-eq}
f(x,t) = \la^3 f(\la x , \la^2 t),\quad \forall(x,t)\in \R^4_+.
\end{equation}
In view of Lemmas \ref{th3.1} and  \ref{th3.2},
we also expect
\begin{equation}
\label{vf-decay}
|v(x,t)| \lec \frac 1{\sqrt t}\bka{\frac x{\sqrt t}}^{-1-\ga}, \quad 
|f(x,t)| \lec \frac 1{ t}\bka{\frac x{\sqrt t}}^{-2} = \frac 1{x^2+2t}.
\end{equation}
The decay rate of $v$ can be improved but it is not needed.

We now set up the framework for the application of the Leray-Schauder
theorem.  Let
\begin{equation}\label{Q-def}
Q=\R^3\times [1,\la^2],
\end{equation}
and the Banach space $X=X(\la)$:
\begin{equation}\label{X-def}
X=\bket{
v\in C(Q; \R^3): \quad
  \begin{aligned}
& \div v=0, \, \norm{v}_X <
  \I,\\
& v(x,1)=\la v(\la x , \la^2 ),\ \forall x \in \R^3
\end{aligned}
 },
\end{equation}
where
\begin{equation}
\label{X-norm}
\norm{v}_X := \sup_{(x,t)\in Q} \bka{x}^{1+\ga} |v(x,t)|.
\end{equation}

For each $v \in X$, we define its DSS extension by
\begin{equation}
Ev(x,t) = \la^k v(\la^k x,
\la^{2k}t), \quad \text{for }(x,t)\in \R^4_+,
\end{equation}
where $k$ is the unique integer so that $1 \le \la^{2k}t < \la^2$.

We now define an operator $K: X \times [0,1] \to X$ by
\begin{equation}
K(v,\sigma) := -\Phi[(\sigma U + Ev)\ot( \sigma U+Ev)]|_Q,
\quad \forall v \in X,\ \forall \sigma \in [0,1].
\end{equation}
Above $\Phi$ is defined by \eqref{La-def}.

Note that for $v\in X$ with $\norm{v}_X<M$ and $0 \le \si \le 1$, the
force $f= -(\sigma U + Ev)\ot( \sigma U+Ev)$ satisfies \eqref{F-eq}
and \eqref{vf-decay}, and $\Phi(f)$ defined by \eqref{La-def}
satisfies \eqref{v-eq1} and is $\la$-DSS. By Lemma \ref{th:Stokes},
its restriction to $Q$, $K(v,\sigma)=-\Phi(f)|_{Q}$, is inside $X$ and
$\norm{K(v,\sigma)}_X \le C (C_*+M)^2$.  Thus $K$ indeed maps bounded
sets in $ X \times [0,1]$ into bounded sets in $X$.

Furthermore, $K$ is compact because its main term $\Phi( \sigma U \ot
\sigma U)|_Q$ is one dimensional while the other terms of $K$ have
extra decay by Lemma \ref{th:Stokes} and are H\"older continuous in
$x$ and $t$ by Lemma \ref{th:Holder}.

We have now a fixed point problem
\begin{equation}
\label{LS-eq}
v = K(v,\sigma) \quad \text{in }X
\end{equation}
that satisfies the following:
\begin{enumerate}
\item $K(v,\si): X \times [0,1]\to X$ is compact by the previous
  discussion,
\item it is uniquely solvable in $X$ for small $\sigma$ by
  \cite{GM,CMP,CP} and Lemma \ref{th3.3}, thus the Leray-Schauder
  degree is nonzero, and
\item we have a priori estimate in $X$ for solutions $v$ of
  \eqref{LS-eq} uniformly for all $\sigma\in [0,1]$ by Lemma
  \ref{th3.1} or \ref{th3.2}.
\end{enumerate}

By Leray-Schauder degree theorem (see e.g.~\cite{Mawhin}), there is a
solution $v \in X$ of \eqref{LS-eq} with $\sigma=1$. It follows that
$Ev$ satisfies \eqref{v-eq1} with $f=-(U+Ev)\ot (U+Ev)$, and hence
$u=U+Ev$ is a $\la$-DSS \LL solution of \eqref{NS1} with initial data
$u_0$.  \myendproof

\section*{Acknowledgments}
I would like to thank Dong Li with whom I had many fruitful
discussions on \cite{JS}. I also thank Hideyuki Miura for discussions
on Lemma \ref{th3.3} for general solutions that may not be DSS.  This
work was partially supported by the Natural Sciences and Engineering
Research Council of Canada grant 261356-08.


{\small
Department of Mathematics,
University of British Columbia,  
Vancouver, BC V6M1Z2, Canada

e-mail: ttsai@math.ubc.ca}

\end{document}